\newtheorem{thm}{Theorem}[section]
\newtheorem{lem}[thm]{Lemma}
\newtheorem{prop}[thm]{Proposition}
\theoremstyle{definition}
\newtheorem{defn}[thm]{Definition}
\newtheorem{ex}[thm]{Example}
\newtheorem{rem}[thm]{Remark}
\DeclareMathOperator{\R}{\mathbb R}
\DeclareMathOperator{\N}{\mathbb N}
\DeclareMathOperator{\Z}{\mathcal Z}
\DeclareMathOperator{\I}{\mathcal I}
\DeclareMathOperator{\Reg}{\mathcal R}
\DeclareMathOperator{\Rego}{\mathcal R^0}
\DeclareMathOperator{\Regz}{\mathcal R_0}
\DeclareMathOperator{\ord}{ord}
\DeclareMathOperator{\Sp}{Spec}
\DeclareMathOperator{\Fpui}{\R((t^{1/\N}))}
\DeclareMathOperator{\Fpuialg}{\R((t^{1/\N}))_{alg}}
\DeclareMathOperator{\Rpui}{\R[[t^{1/\N}]]}
\DeclareMathOperator{\Rpuialg}{\R[[t^{1/\N}]]_{alg}}
\DeclareMathOperator{\Rpuifor}{\R[[t^{1/\N}]]}
\def \RR {{\mathbb R}}
\def\spr {{\rm Spec}_r}
\begin{document}

\title[\tiny{Substitution property for the ring of continuous rational functions}]
{Substitution property for the ring of continuous rational functions}

\author[G.~Fichou, J.-P.~Monnier, R. Quarez]{Goulwen Fichou,
  Jean-Philippe Monnier, Ronan Quarez}

\thanks{We thank Michel Coste for fruitful discussions on the topic.\\ The authors benefit from the support of the Centre Henri Lebesgue ANR-11-LABX-0020-01.}

\address{Goulwen Fichou\\Univ Rennes, CNRS, IRMAR - UMR 6625, F-35000 Rennes, France}
\email{goulwen.fichou@univ-rennes1.fr}

\address{Jean-Philippe Monnier\\
   LUNAM Universit\'e, LAREMA, Universit\'e d'Angers}
\email{jean-philippe.monnier@univ-angers.fr}

\address{Ronan Quarez\\
Universit\'e de
         Rennes\\
Campus de Beaulieu, 35042 Rennes Cedex, France}
\email{ronan.quarez@univ-rennes1.fr}

\date{\today}

\maketitle

\begin{quote}\small
\textit{MSC 2000:} 14P99, 11E25, 26C15
\par\noindent
\textit{Keywords:} continuous rational function, real algebraic variety, substitution property.
\end{quote}

\begin{abstract}
We study the substitution property for the ring $\Regz(V)$ 
of continuous rational functions on a real algebraic
affine variety $V$. We show that $\Regz(V)$ satisfies a substitution property along points; moreover,
when $V$ is non-singular, it satisfies also a substitution property along Puiseux arcs, which characterizes $\Regz(V)$.
\end{abstract}

\section{Introduction}
Given a real algebraic variety $V\subset\R^n$, any morphism $\phi:\R[V]\rightarrow \R$ 
can be seen as an evaluation morphism at a certain point $x_0\in V$,
where $\R[V]$ denotes the coordinate ring of $V$.
For any extension ring $B$ of $\R[V]$ a natural question is to ask whether the 
evaluation morphism does extend uniquely to $B$. This question is known as the substitution property,
when one considers, more generally, an evaluation morphism $\phi:\R[V]\rightarrow R$
into a real closed field extension $R$ of $\R$. 
Such a property gives a lot of information on the real algebra of the ring $B$
namely, one may derive Artin-Lang property, Positivstellensatz, etc.

A natural class of rings $B$ to test are the different rings of functions considered in real algebraic geometry. The larger class consists in the ring of continuous (with respect to the Euclidean topology) semi-algebraic functions on $V$, where the continuity is intended with respect to the Euclidean topology on $V$. In that situation, the substitution property is known to be true by \cite{Sz}. A more rigid class is given by the ring of Nash functions on $V$, namely real analytic and semi-algebraic functions. Here also the substitution property holds true (see \cite[8.5.2]{BCR}).

In the present paper, we study the case of the ring $\Regz(V)$ of continuous rational functions
on $V$. Continuous rational functions on a real algebraic variety $V$ are those rational functions that admit a continuous extension along their poles. They form an intermediate ring between regular functions (rational functions without real poles) and continuous semi-algebraic functions.
This ring has been intensively studied in several recent works (
\cite{FMQ}, \cite{KN}, \cite{Kr}, \cite{Ku}, \cite{KuKu},
\cite{KKK}, \cite{FHMM}, \cite{Mo}).

The main results of the paper are the following. 
We first show that $\Regz(V)$ satisfies the substitution property
(Theorem \ref{SubstitutionRegulueSing}), generalizing a result in \cite{FMQ}. A natural question is then whether this property  characterizes the ring of continuous rational functions. However we prove that this is not the case (Proposition \ref{NonMaximalSubstitutionRationalCont}). This motivates the introduction of the concept of evaluation along Puiseux arcs in section \ref{sect-arcs}. More precisely, 
we take evaluation morphisms of the form $\phi:\R[V]\rightarrow \Rpui$, where 
$\Rpuialg$ is the ring of Puiseux power series over $\R$ that are algebraic over real polynomials.

Depending upon our variety $V$ is singular or not, we obtain two opposite results. 
Firstly, if $V$ is non-singular,
then $\Regz(V)$ satisfies the substitution property along arcs and moreover this property
characterizes $\Rego(V)$ (see theorem \ref{SubstitutionRegulueArcs}). Secondly, it exists a singular variety $V$ such that $\Regz(V)$ does not satisfy the substitution property
along arcs (see theorem \ref{NoSubstitutionAlongArc}). 

\section{Substitution on points}

\subsection{Definition and first examples}\label{DefAndfirstsexamples}

\begin{defn}\label{SubstiutionPonctuelle}
Let $A$ be an $\R$-algebra and $B$ be an $A$-algebra. 
We say that $B$ has the substitution property over $A$ if, for any real closed field $R$, 
any morphism $A\rightarrow R$ admits one and only one 
lifting to $B$, namely such that the following diagram is commutative :
$$\begin{array}{ccc}
A&\rightarrow& R\\
\downarrow&\nearrow\\
B&&\\
\end{array}$$
If the property if valid for the particular real closed field $R=\R$ we say that $B$ satisfies the weak substitution property.
\end{defn}

Recall that an ideal $I$ of $A$ is called
real if, for every sequence $a_1,\ldots,a_k$ of elements of $A$, then
$a_1^2+\cdots+a_k^2\in I$ implies $a_i\in I$ for $i=1,\ldots,k$. A
field $F$ is called real if $(0)$ is a real ideal in $F$.

Here is the first easy fact :
\begin{prop}
Let us assume that $A$ is a domain whose fraction field is real.
If $B$ satisfy the substitution property over $A$, then the morphism $A\rightarrow B$ is injective.
\end{prop}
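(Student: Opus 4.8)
The plan is to argue by contradiction, applying the substitution property to a single, carefully chosen real closed field.

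Assume the structure morphism $A\to B$ is not injective and fix $a\in A$ with $a\neq 0$ lying in its kernel. Since $A$ is a domain it embeds into its fraction field $F$, and the image of $a$ in $F$ is nonzero. By hypothesis $F$ is real, hence formally real, so by Artin--Schreier theory $F$ admits at least one ordering; fix one and let $R$ be a real closure of $F$ with respect to it. We thus obtain a composed morphism
$$\phi\colon A\longrightarrow F\longrightarrow R$$
into the real closed field $R$. Both $A\to F$ and $F\to R$ are injective, so $\phi(a)\neq 0$; note also that the ordering on $F$ restricts to the (unique) ordering of $\R$, so that $R$ is a real closed field extension of $\R$ and $\phi$ is a morphism of $\R$-algebras.

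Now invoke the substitution property for $\phi$: there is a morphism $\psi\colon B\to R$ such that $\psi$ composed with the structure map $A\to B$ equals $\phi$. Writing $\bar a$ for the image of $a$ in $B$, we get $\phi(a)=\psi(\bar a)$. But $\bar a=0$ by the choice of $a$, whence $\phi(a)=\psi(0)=0$, contradicting $\phi(a)\neq 0$. Therefore $A\to B$ is injective.

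The argument is short once the right real closed field has been produced; the only step worth flagging is the passage from ``$F$ real'' to ``$F$ orderable'', which is precisely where the hypothesis on the fraction field is used. That hypothesis cannot be dropped: for $A=\C[t]$ (a domain) and $B=A/(t)\cong\C$ the map $A\to B$ is not injective, yet $B$ has the substitution property over $A$ vacuously, since there is no morphism at all from $A$ to a real closed field. Finally, observe that only the existence of the lift, not its uniqueness, is needed.
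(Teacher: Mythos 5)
Your proof is correct and follows essentially the same strategy as the paper: produce a morphism from $A$ into a real closed field that does not annihilate $a$, and derive a contradiction from the existence of the lift through $B$ (uniqueness is indeed not needed). The only difference, which in fact simplifies the argument, is that you take the real prime $(0)$ itself and pass to a real closure of the fraction field, whereas the paper invokes the fact that the real ideal $(0)$ is the intersection of all real primes to pick some real prime ${\mathfrak p}$ with $a\notin{\mathfrak p}$ and then uses the real closure of its residue field.
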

\begin{proof}
Let us assume that $0\not=a\in A$ is sent onto $0$ in $B$. Since the fraction field of $A$ is real, the null ideal $(0)$ in $A$ is real 
and hence it is the intersection of all the real prime ideal of $A$. Then, there is a real prime ideal 
${\mathfrak p}$ in $A$ such that $a\notin{\mathfrak p}$. Take then for $R$ the real closure of the residual field
at ${\mathfrak p}$. 
Then, $a$ is sent onto a non-zero element in $R$, which leads to a contradiction since it should be $0$ after lifting the evaluation 
morphism to $B$.
\end{proof}

In the following, we will mainly consider rings $B$ which are subrings
of the fraction field of a domain $A$.
If $f=p/q$ where $p$ and $q$ lye in $A$, starting from a morphism $\phi:A\rightarrow R$,
it is easy to define $\phi(f)=\frac{\phi(p)}{\phi(q)}$ as long as $\phi(q)\not=0$.
This elementary observation gives us the substitution property 
if $B$ is a subring of the ring of regular functions of $A$.
We recall that the ring of regular functions of $A$ is just the ring 
$(1+\sum A^2)^{-1}A$.

Hence, in all the following, we will mainly consider rings $B$ which are subrings of the field of
rational functions and which contain the ring of regular functions of $A$.
And the problem we will face is to define $\phi(f)$ when $\phi(q)=0$.

The first important consequence of definition \ref{SubstiutionPonctuelle}, 
is that one gets a one-to-one correspondence between 
the real spectrum of $B$ and $A$.
For background about the real spectrum of a ring, we refer to \cite{BCR}. In few words, let us 
say that a point $\alpha$ of the real spectrum of the ring $A$ is an equivalence class
of morphisms $\pi$ from $A$ into a real closed field
for the equivalence relation generated by the following :
$\pi\sim\pi'$ 
where $\pi:A\rightarrow R$  and $\pi':A\rightarrow R'$  with 
$R$ and $R'$ are real closed field, if there exists a factorization of $\pi'$ 
through a morphism $R\rightarrow R'$.   

The real spectrum is endowed with two natural topologies : the real spectrum topology and the constructible
topology which are defined as follows.
Any element $a\in A$ can be evaluated at any $\alpha\in\Sp_r A$ simply by evaluating it at
the morphism $\pi:A\rightarrow R$ given by $\alpha$. 
Then, $\{\alpha\in\Sp_r A\mid a(\alpha)>0\}$ gives a basis of open neighborhood
for the real spectrum topology. Then, the constructible open subsets are those 
obtained by boolean combination from the open subsets with respect to the real spectrum 
topology.

To any semi-algebraic subset $S$ of $\R^n$ is canonically associated a constructible subset
$\widetilde{S}$ of $\spr \R[x_1,\ldots,x_n]$. This association is compatible with the boolean operations (intersection, union, complementary) 
and is such that the semi-algebraic subset $S=\{x\in\R^n\mid p(x)>0\}$ where $p\in \R[x_1,\ldots,x_n]$
is associated to $\widetilde{S}=\{\alpha\in\spr \R[x_1,\ldots,x_n]\mid p(\alpha)>0\}$.

Now we state the announced correspondence.

\begin{prop} 
\label{corresp}
If $B$ satisfy the substitution property over $A$, then 
the induced morphism on the real spectrum $\spr B\rightarrow\spr A$
is bijective and continuous (with respect to both the real spectrum topology and 
the constructible topology).
\end{prop}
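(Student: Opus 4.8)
The plan is as follows. First, recall that the map $f\colon\spr B\to\spr A$ in question is the one induced by the structural morphism $A\to B$: it sends the class of a morphism $\pi\colon B\to R$ into a real closed field $R$ to the class of the composite $A\to B\xrightarrow{\pi}R$, or equivalently it restricts a point of $\spr B$, given by a prime $\mathfrak q\subset B$ together with an ordering of the residue field $\kappa(\mathfrak q)$, to the point $(\mathfrak q\cap A,\le)$ of $\spr A$.

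I would dispatch continuity first, since it needs nothing beyond the fact that $B$ is an $A$-algebra. Writing $\bar a\in B$ for the image of $a\in A$, one has $f^{-1}(\{\alpha\mid a(\alpha)>0\})=\{\beta\mid\bar a(\beta)>0\}$; as the sets $\{a>0\}$, $a\in A$, form a basis of the real spectrum topology, $f$ is continuous for it, and as $f^{-1}$ commutes with finite boolean combinations while the constructible subsets form a basis of the constructible topology, $f$ is continuous for the constructible topology as well (cf. \cite{BCR}). Surjectivity is then just the existence half of the substitution property: a representative $\pi\colon A\to R$ of a given $\alpha\in\spr A$ lifts to some $\tilde\pi\colon B\to R$, and the class $\beta$ of $\tilde\pi$ satisfies $f(\beta)=\alpha$.

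The substance of the proposition is injectivity, where the uniqueness half is used. Fix $\beta\in\spr B$ with canonical representative $\rho\colon B\to k(\beta)$, meaning $k(\beta)$ is the real closure of $\mathrm{Frac}(B/\mathfrak q_\beta)$ with the ordering carried by $\beta$; in particular $k(\beta)$ has no proper real closed subfield containing $\rho(B)$. Put $\alpha=f(\beta)$. The restriction $\rho|_A\colon A\to k(\beta)$ represents $\alpha$, so the real closure inside $k(\beta)$ of $\mathrm{Frac}(\rho(A))$ is a copy $K$ of $k(\alpha)$, the real closure of the residue field of $\alpha$, the identification $K\cong k(\alpha)$ being canonical. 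I would then apply the substitution property to $\rho|_A$ with two different real closed targets. With target $R=k(\beta)$, the map $\rho$ is a lifting of $\rho|_A$ to $B$, and by hypothesis the only one. With target $R=K$ — legitimate since $\rho(A)\subseteq K$ — one gets a lifting $\sigma\colon B\to K$ of $\rho|_A$; composing it with $K\hookrightarrow k(\beta)$ produces another lifting of $\rho|_A$ to $B$ with target $k(\beta)$, so by uniqueness $\rho$ factors through $K$, i.e. $\rho(B)\subseteq K$. By the minimality property of $k(\beta)$ this forces $K=k(\beta)$. Hence, under the canonical identification $k(\beta)=K\cong k(\alpha)$, the morphism $\rho$ is precisely the unique lifting to $B$ of the canonical representative $A\to k(\alpha)$ of $\alpha$. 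This lifting depends only on $\alpha$; therefore any two points of $\spr B$ lying over $\alpha$ have equivalent canonical representatives and so coincide, and $f$ is injective.

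The one delicate point I expect is the equality $k(\beta)=k(\alpha)$ of residue real closed fields: the substitution property must be shown to prevent the residue field from growing along $f$, and it is exactly here that the uniqueness clause has to be invoked, with the two targets $k(\beta)$ and $K$. The rest is routine.
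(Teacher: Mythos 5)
Your proof is correct, and its skeleton (continuity from the basic opens as in \cite{BCR}, surjectivity from the existence clause, injectivity from the uniqueness clause) matches the paper's. The difference lies in how injectivity is implemented. The paper takes two representatives $\beta\colon B\to R$ and $\beta'\colon B\to R'$ over the same point of $\spr A$, asserts that $R$ and $R'$ embed compatibly into a common real closed field $R''$, and applies uniqueness of the lifting into $R''$; the amalgamation step is left implicit (it works because both morphisms from $A$ factor through the real closure $k(\alpha)$ of the ordered residue field, over which real closed fields amalgamate). You instead pass to canonical minimal representatives: using the existence clause with target $K\cong k(\alpha)$ together with uniqueness with target $k(\beta)$, you force $\rho(B)\subseteq K$ and hence $k(\beta)=k(\alpha)$, so every point over $\alpha$ is the unique lifting of the canonical representative $A\to k(\alpha)$. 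Your route avoids the amalgamation fact the paper uses silently, at the price of invoking the existence half inside the injectivity argument (the paper needs only uniqueness there), and it yields the extra information that the residue real closed fields do not grow along $\spr B\to\spr A$ --- a refinement not visible in the paper's two-line argument. Both arguments are sound; the delicate point you flag (that $k(\beta)=k(\alpha)$) is exactly the content that the paper's phrase ``there exists a real closed field $R''$ which extends both'' sweeps under the rug.
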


\begin{proof}
By definition, the induced morphism on the real spectrum $\spr B\rightarrow\spr A$ is continuous 
with respect to the real spectrum topology and also for the constructible topology
(see \cite[7.17]{BCR}).

Surjectivity comes immediately from the existence of the factorization in the substitution property.

Injectivity comes immediately from the uniqueness of the factorization in the substitution property.
Indeed, let us consider two points $\beta:B\rightarrow R$ and 
$\beta':B\rightarrow R'$ where $R,R'$ are real closed fields and assume that 
the compositions $\beta\circ\phi$ and $\beta'\circ\phi$
by $\phi:A\rightarrow B$ give rise to the same point in $\spr A$.
Namely, there exists a real closed filed $R''$ which extends both 
$R$ and $R'$, which implies that $\beta=\beta'$.
\end{proof}

From now on, we will
takes for $A$ a ring of polynomials functions (or coordinate ring) $\R[V]$, 
or one of its localization, where $V$ is an algebraic variety in $\RR^n$ in the sense of \cite{BCR}. 

There are two well-known classes of functions whose ring  
satisfies the substitution property : Nash functions and continuous semi-algebraic functions.

More precisely, if $A=\R[X_1,\ldots,X_n]$, then one has the substitution property for the ring of 
Nash functions on $\R^n$ (\cite[8.5.2]{BCR}) and one may generalize it to any 
smooth algebraic variety $V$ in $\RR^n$.

The substitution property is also valid in the ring of semi-algebraic continuous functions on $V$.
This result can be seen as a consequence of the theory of real closed rings developed by
N. Schwartz in  \cite{Sz}. 

See also the work of J. Fernando on the subject \cite{Fe}.

In these two cases, 
since a morphism $A\rightarrow R$ corresponds to the evaluation at some point $(x_1,\ldots,x_n)$ in
$V\subset \R^n$, 
the substitution property says that any evaluation morphism can be uniquely lifted to the ring $B$.  

Beware that a subring of a ring satisfying the substitution property does not necessarily satisfies the substitution 
property as it is shown by the following example :

\begin{ex}\label{vsexsubssemialg}
Let $A=\R[x]$ and $B=\R[x,\sqrt{1+x^2}]\simeq \R[x,y]/(y^2-(1+x^2))$. Then, the morphism $A\rightarrow \R$ which send $x$ to $0$ 
(the evaluation morphism at the origin) admits two liftings to $B$ : the morphism which 
send $y$ to $1$ and the morphism which 
send $y$ to $-1$. 
Moreover, $B$ is a subring of the ring of semi-algebraic continuous functions on $\R$.
\end{ex}

\subsection{Substitution for continuous rational functions}

Let $V\subset \R^n$ be a real algebraic variety.
We will mainly be interested in the ring of continuous rational functions defined on 
$V$. These functions are rational functions on $V$ which can be seen as functions defined on the complementary of 
the zero set of their denominators, and which admits a continuous extension to the whole $V$.
In order to avoid pathological cases, we will mainly assume that the variety $V$ is central 
which means that the set of non-singular points
of $V$ is dense in $V$ for the euclidean topology. 
Of course, any non-singular variety is central.

Since we require for our rings to be contained in the field of fraction $\R(V)$ of $V$, 
one should consider the natural 
assumption for the variety $V$ to be moreover irreducible. So let us give now the formal definition of continuous rational functions.

\begin{defn} Let $V\subset \R^n$ be an irreducible central real
  algebraic variety. A continuous function $f:V\to \R$ is said to be
  continuous rational if there exists a non-empty Zariski open subset $W\subset V$ such that the restriction $f_{|W}$ of $f$ to $W$ is a regular function on $W$. We denote by $\Regz(V)$ the ring of continuous rational functions on $V$.
\end{defn}

\begin{rem}
An alternative ring of functions to work with 
would be the ring of hereditarily continuous rational or regulous functions on a central variety $V$ 
defined in \cite{KN} as those continuous rational functions which
remains rational in restriction to any subvariety. 
When the variety $V$ is smooth, we recover 
continuous rational functions, but in general it leads to a proper subring. We will consider this ring solely in Theorem \ref{SubstitutionRegulueSing}.
\end{rem}

Let $A=\R[X_1,\ldots,X_n]$. 
Let us start with an elementary observation about rings of functions
satisfying the substitution property. If $f$ is a real function
defined on a subset of $\R^n$, we denote by $\Z(f)$ the zero set of $f$.
\begin{prop}
Let $A=\R[x_1,\ldots,x_n]$ and $B$ 
be a sub $A$-algebra of $\R(x_1,\ldots,x_n)$ satisfying the substitution property over $A$. \par
Let $f=p/q\in B$ with $p$ and $q$ coprimes. 
Then,
\begin{enumerate}
\item[(i)] $\Z(q)\subset \Z(p)$,
\item[(ii)] $\Z(q)$ has codimension at least $2$ in $\R^n$.
\end{enumerate}
\end{prop}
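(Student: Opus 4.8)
The plan is to exploit the substitution property to produce evaluation morphisms $\phi:A\to R$ that force algebraic constraints on $p$ and $q$. For (i), suppose $x_0\in\Z(q)\setminus\Z(p)$, so $q(x_0)=0$ but $p(x_0)\neq 0$. Consider the evaluation morphism $\phi:A\to\R$ at $x_0$. By the substitution property this lifts uniquely to a morphism $\psi:B\to\R$ with $\psi(x_i)=\phi(x_i)$; in particular $\psi$ restricted to $A$ is evaluation at $x_0$. But then, applying $\psi$ to the relation $q\cdot f=p$ in $B$ gives $\psi(q)\psi(f)=\psi(p)$, i.e. $q(x_0)\psi(f)=p(x_0)$, so $0=p(x_0)$, a contradiction. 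Hence $\Z(q)\subset\Z(p)$.

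For (ii), the claim is that $\Z(q)$ has codimension at least $2$. Suppose for contradiction that $\Z(q)$ has a component $H$ of codimension $1$, hence $H=\Z(h)$ for some irreducible polynomial $h$ dividing $q$; write $q=h^m u$ with $h\nmid u$ and $m\geq 1$. Since $p,q$ are coprime, $h\nmid p$, so $p$ does not vanish identically on $H$; pick a smooth point $x_0$ of $H$ at which $p(x_0)\neq 0$, $u(x_0)\neq 0$, and $h$ has nonvanishing gradient, so that a real-analytic (or polynomial) curve $\gamma(t)$ can be chosen with $\gamma(0)=x_0$ crossing $H$ transversally, along which $h(\gamma(t))$ vanishes to order exactly $1$. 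Then $f(\gamma(t))=p(\gamma(t))/(h(\gamma(t))^m u(\gamma(t)))$ blows up as $t\to 0$. The idea is to feed this into the substitution property over the real closed field $R$ of germs at $0^+$ of one-variable Nash (or semialgebraic continuous) functions, or equivalently a suitable field of Puiseux series: the arc $\gamma$ defines a morphism $\phi:A\to R$, which must lift to $\psi:B\to R$, and then $\psi(f)$ is a well-defined element of $R$ satisfying $\psi(q)\psi(f)=\psi(p)$; but $\psi(q)=q\circ\gamma$ is a nonzero element of $R$ vanishing at $0$ while $\psi(p)=p\circ\gamma$ is a unit, which is impossible in the valuation ring, or more simply: $\psi(f)=(p\circ\gamma)/(q\circ\gamma)$ is forced and this element is not in $R$ because it is unbounded near $0$. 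This contradiction shows no codimension-$1$ component can occur.

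The main obstacle is the second part: one must be careful about which real closed field $R$ to use and check that the arc really does give a genuine morphism $A\to R$ landing outside the domain of definition of $f$ in the appropriate sense, i.e. that $q\circ\gamma$ is a nonzero non-unit while $p\circ\gamma$ is a unit in the valuation ring of $R$. Choosing $\gamma$ transverse to the smooth locus of $H$ and avoiding $\Z(p)\cup\Z(u)$ is what makes this work, and this is exactly where centrality/irreducibility of the ambient affine space (here $V=\R^n$ is smooth) is used implicitly. An alternative, cleaner route for (ii) is to invoke the already-available substitution property for continuous semialgebraic functions: a rational function $f=p/q$ with $\Z(q)$ of codimension $1$ and $\Z(q)\subset\Z(p)$ cannot extend continuously across $\Z(q)$ unless further divisibility holds, but coprimality of $p,q$ together with the behaviour along a transverse arc shows the extension fails to be single-valued or bounded, contradicting that $f\in B$ with $B$ satisfying substitution. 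Either way, the heart of the matter is the transverse-arc construction; the rest is bookkeeping with the universal property.
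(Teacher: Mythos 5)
Your part (i) is correct and is exactly the paper's argument: apply the lifted evaluation at a point of $\Z(q)$ to the relation $qf=p$ in $B$. The gap is in (ii). The substitution property of Definition \ref{SubstiutionPonctuelle} only concerns morphisms into real closed \emph{fields} $R$. If $\gamma$ is your transverse arc and $R$ is the field of algebraic Puiseux series (equivalently, germs at $0^+$ of one-variable Nash or continuous semi-algebraic functions on $(0,\epsilon)$), then $R$ contains unbounded elements such as $1/t$. Hence the forced value $\psi(f)=(p\circ\gamma)/(q\circ\gamma)$ is a perfectly legitimate element of $R$, merely of negative valuation, and no contradiction arises: since $q\circ\gamma\neq 0$, the lift exists and is unique with no constraint violated. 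Your decisive step, ``this element is not in $R$ because it is unbounded near $0$,'' confuses the real closed field $R$ with its valuation ring $\Rpuialg$; requiring the lift to take values in the valuation ring is precisely the strictly stronger substitution property \emph{along arcs} that the paper only introduces in Section \ref{sect-arcs}, and it is not part of the hypothesis here. The same objection hits your ``cleaner'' variant: elements of $B$ are just rational functions, not assumed to be continuous functions on $\R^n$, so there is no continuous extension across $\Z(q)$ to argue about. (The discussion after Proposition \ref{NonMaximalSubstitutionRationalCont} makes exactly this point: the pointwise lifting property does not force continuity or boundedness along Puiseux arcs.)

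The statement (ii) is nevertheless true, and the intended proof is purely algebraic, using (i): if $\Z(q)$ had a codimension-one component, it would come from an irreducible factor $q_1$ of $q$ whose zero set has codimension one, so that $(q_1)$ is a real prime ideal by the change-of-signs criterion \cite[4.5.1]{BCR}. Since $\Z(q_1)\subset\Z(q)\subset\Z(p)$ by (i), the real Nullstellensatz forces $q_1$ to divide $p$, contradicting the coprimality of $p$ and $q$. This short argument replaces your transverse-arc construction and needs no choice of real closed field beyond what (i) already used.
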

\begin{proof}
To show (i), let us just write $q=fp$ and use the substitution
property for $B$ over $A$.\par
To show (ii), assume there exists an irreducible factor $q_1$ of $q$ 
corresponding to a codimension one variety in $\R^n$. Then,
the condition $\Z(q_1)\subset \Z(p)$ implies that $q_1$ divides $p$ since $(q_1)$ 
is a real prime ideal (cf the so-called change of signs criterion \cite[4.5.1]{BCR}), 
in contradiction with the coprimality of $p$ and $q$.
\end{proof}

\begin{rem}
Note that in the one-dimensional case, condition (i) 
says already that $B$ is a subring of the ring of regular functions.
\end{rem}

Note that the previous proposition generalizes to non-singular varieties :
\begin{prop}\label{codimzerosdenom}
Let $A=\R[V]$ where $V$ is a non-singular irreducible algebraic variety in $\RR^n$. 
Let $B$ 
be a sub $A$-algebra of $\R(V)$ satisfying the substitution property over $A$. \par
Let $f=p/q\in B$ with $p$ and $q$ coprimes i.e $(p)+(q)=1$. 
Then,
\begin{enumerate}
\item[(i)] $\Z(q)\subset \Z(p)$,
\item[(ii)] $\Z(q)$ has codimension at least $2$ in $V$.
\end{enumerate}
\end{prop}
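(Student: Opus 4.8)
The plan is to reduce the statement to the affine case already treated in the previous proposition by working locally around a point where the denominator vanishes, using that a non-singular variety is locally (in the sense of localizing the coordinate ring) a regular ring, indeed a regular local ring which is a UFD. First I would observe that assertion (i) follows exactly as before: since $f=p/q\in B$ and $B$ satisfies the substitution property over $A=\R[V]$, writing $q=fp$ and evaluating at the point $x_0\in V$ (i.e. applying the substitution property to the morphism $A\to\R$ given by evaluation at $x_0$, lifted to $B$) forces $p(x_0)=0$ whenever $q(x_0)=0$. Here the hypothesis $(p)+(q)=\R[V]$ is not needed, only that $f$ lies in $B$ and the substitution property holds.

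For (ii), I would argue by contradiction: suppose $\Z(q)$ has an irreducible component $Z$ of codimension one in $V$. The key point is to pass to the local ring $\SO_{V,Z}$, the localization of $\R[V]$ at the prime ideal $\mathfrak p$ of $Z$; since $V$ is non-singular, $\SO_{V,Z}$ is a regular local ring of dimension one, hence a discrete valuation ring, in particular a PID and a UFD. In this local ring we can speak of the order of vanishing $\ord_Z$ along $Z$. The coprimality condition $(p)+(q)=\R[V]$ ensures that $p$ does not vanish identically on $Z$, so $\ord_Z(p)=0$, while $\ord_Z(q)\geq 1$. Now I would invoke a real algebraic fact — the change of sign criterion, \cite[4.5.1]{BCR}, applied in the regular local ring $\SO_{V,Z}$, or rather to the prime $\mathfrak p$ which is real because $V$ is non-singular (and central) and $Z$, being a codimension-one component of the real zero set $\Z(q)$, contains a non-singular point of $V$ and so a smooth point where $q$ changes sign is available: since $\Z(q)\subset\Z(p)$ from (i), the function $p/q$ would have to be bounded near a generic smooth point of $Z$, contradicting that $q$ vanishes to higher order than $p$ there. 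Equivalently, $\mathfrak p$ being real and of codimension one, $q\in\mathfrak p$ together with $\Z(q)\subset\Z(p)$ forces $q$ to divide $p$ in $\SO_{V,Z}$, hence $p\in\mathfrak p$, contradicting $(p)+(q)=\R[V]$.

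The main obstacle I anticipate is the step where one transfers the change of sign criterion from the affine situation to the local ring of a codimension-one subvariety of $V$: one must make sure that the prime $\mathfrak p$ defining $Z$ is indeed real, which uses non-singularity of $V$ in an essential way (on a singular variety a codimension-one component of $\Z(q)$ need not correspond to a real prime, which is precisely why the statement is restricted to non-singular $V$), and one must know that $\SO_{V,Z}$ is a valuation ring so that "$\Z(q)\subset\Z(p)$ locally along $Z$" really does imply divisibility $q\mid p$. Once these local algebraic facts are in place, the contradiction with coprimality is immediate, exactly as in the $\R^n$ case. An alternative, more geometric route would be to choose a smooth point $x_0$ of $V$ lying on $Z$ but on no other component of $\Z(q)$, take a smooth curve through $x_0$ transverse to $Z$, restrict $f$ to it, and observe that a continuous — indeed merely well-defined via substitution along $R=\R$ — value at $x_0$ is incompatible with $q$ having a zero of order $\geq 1$ and $p$ a zero of order $0$ along that curve; this reduces everything to the one-variable computation.
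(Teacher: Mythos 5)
Your proof is correct and takes essentially the same route as the paper: the paper's proof consists precisely of the remark that, $V$ being non-singular, its local rings are regular hence factorial, so the $\R^n$ argument (a codimension-one component of $\Z(q)$ gives a real height-one prime, and $\Z(q)\subset\Z(p)$ then forces a common factor of $p$ and $q$, contradicting coprimality) goes through verbatim; you do the same thing, merely localizing at the generic point of the component (a DVR) rather than at a closed point (a UFD), which is an inessential variation. One cosmetic point: in (i) the identity to evaluate is $p=fq$ rather than $q=fp$ (a slip the paper's own proof also makes), since the conclusion comes from $\phi(p)=\phi(f)\phi(q)=\phi(f)\,q(x_0)=0$.
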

\begin{proof}
The coordinates ring $\R[V]_{m_x}$ of the variety $V$ localized at any point $x\in V$ is
regular and hence an UFD, so that the proof can be carried out similarly.
\end{proof}
This result can be used when $B=\Regz(V)$ where 
$V\subset \R^n$ is non-singular (see theorem \ref{SubstitutionRegulue}). 
On the contrary, the proposition cannot be extended to the case $V$ is singular as illustrated  by the example of the Cartan umbrella.
\begin{ex}\label{Cartan}
Let $V\subset \R^3$ with equation $x^3=z(x^2+y^2)$, and consider the
rational function on $V$ given by $f=x^3/(x^2+y^2)$ extended by $z$ on
the stick of the umbrella.
Then, the zero set of its denominator $q$ is 
the whole stick of the umbrella, therefore of codimension one in $V$.
\end{ex}

We come now to the central result of this section, which gives the substitution 
property for the ring of continuous rational functions 
on a non-singular real algebraic variety. We recall that the argument given in \cite[5.4]{FMQ} relies on 
\L ojaciewicz inequality together with 
the fact that a continuous rational function on $\R^n$ admits a constructible stratification 
such that it is regular in restriction to its strata.

This last property extend to any non-singular variety:
\begin{thm}\label{ConstructibleStratification}
Let $V\subset\RR^n$ be a non-singular irreducible real algebraic variety and 
$f$ be a continuous rational function on $V$. 
Then, there exists a stratification of $V$ into Zariski locally closed subsets $S_1,\ldots,S_m$
such that the restriction $f|_{S_k}$ of $f$ to $S_k$ is a regular function.
\end{thm}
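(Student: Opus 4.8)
The plan is to reduce the statement about a non-singular variety $V$ to the known case of $\R^n$ by a local analysis, using the fact that continuous rational functions and regularity are local properties on $V$. First I would recall the statement for $\R^n$: a continuous rational function $f$ on $\R^n$ admits a finite stratification into Zariski locally closed subsets on each of which $f$ restricts to a regular function. This is the property invoked in \cite[5.4]{FMQ}. The goal is to transfer this to $V$.

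\medskip

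The key mechanism I would use is that non-singularity of $V$ guarantees, via \cite{BCR}, that $V$ can be covered by Zariski open subsets $U_i$ each of which is a smooth affine variety admitting an \'etale (or, more simply, a Zariski-local) description: for each point $x \in V$ there is a Zariski neighborhood $U$ of $x$ in $V$, of dimension $d = \dim V$, together with a finite morphism or a projection onto $\R^d$ that is \'etale at $x$. More concretely, since $V$ is non-singular, for each $x$ there are coordinate functions among the $X_i$ restricting to a system of local parameters, giving a dominant map $U \to \R^d$ which is a local isomorphism (in the Zariski topology on a suitable smaller open set, after inverting a Jacobian). I would then pull back/push forward: a continuous rational function $f$ on $V$ restricts to a continuous rational function on $U$; transporting along the \'etale projection, it corresponds to a continuous rational function on an open subset of $\R^d$ (or one handles the \'etale case directly by the same stratification argument, since the stratification property is \'etale-local). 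Applying the $\R^d$ (or $\R^n$) version, we obtain a finite constructible stratification of $U$ into Zariski locally closed pieces on which $f$ is regular.

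\medskip

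The remaining step is to globalize: cover $V$ by finitely many such $U_i$ (possible by quasi-compactness of $V$ in the Zariski topology, $V$ being an affine variety), obtain on each $U_i$ a finite stratification $\{S^{(i)}_j\}$ with $f|_{S^{(i)}_j}$ regular, and then refine the union of all these stratifications into a single stratification of $V$ by Zariski locally closed subsets. Intersecting strata from different charts and taking Zariski closures/complements to cut the overlaps into locally closed pieces is a routine constructible-set bookkeeping step; regularity of $f$ is preserved under restriction to a Zariski locally closed subset of a piece on which it is already regular. This yields the finite stratification $S_1,\dots,S_m$ of $V$ with $f|_{S_k}$ regular.

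\medskip

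The main obstacle I anticipate is the transfer along the local \'etale (or projection) description: one must be careful that ``regular on a stratum'' is genuinely a local notion that behaves well under \'etale base change, and that the continuous rational function on $V$ really does correspond to a continuous rational function upstairs/downstairs so that the $\R^n$ result applies verbatim. A cleaner route, avoiding \'etale maps, is to argue directly: embed $U$ as a smooth closed subvariety of some $\R^N$, extend $f$ (on the Zariski open locus where it is regular) and mimic the proof of the $\R^n$ case — the \L ojasiewicz-type and constructibility ingredients used in \cite[5.4]{FMQ} only require smoothness of the ambient variety, not that it be affine space. In that formulation the ``hard part'' is simply checking that the constructible stratification argument of \cite{FMQ} is insensitive to replacing $\R^n$ by an arbitrary non-singular $V$, which amounts to verifying that local rings of $V$ at its points are regular UFDs (already noted in the proof of Proposition \ref{codimzerosdenom}) and that the relevant \L ojasiewicz inequalities hold on $V$.
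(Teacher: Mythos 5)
There is a genuine gap: the local reduction to the $\R^n$ case does not work as you describe. Inverting a Jacobian only makes the projection $U\to\R^d$ \emph{\'etale} at the point, not a Zariski-local isomorphism; a non-singular real variety is in general not covered by Zariski open sets isomorphic to open subsets of $\R^d$ (think of a smooth curve of positive genus with real points, which cannot contain a Zariski open set isomorphic to an open subset of $\R$, since it is not rational). And the \'etale map cannot be used to ``transport'' $f$ either: the projection is not injective, its local inverses are only Nash (not rational), so $f$ does not correspond to a continuous \emph{rational} function on an open subset of $\R^d$, and the known $\R^n$ statement cannot be applied downstairs. The asserted ``\'etale-locality'' of the stratification property is exactly what would have to be proved, and it goes in the wrong direction (the function lives upstairs, the known case is downstairs). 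Your fallback — ``mimic the proof of the $\R^n$ case on $V$'' — is not carried out, and it glosses over the real issue: the natural argument strips off the dense open stratum where $f$ is regular and must then continue on the (possibly singular) proper subvariety left over, which requires knowing that the restriction of $f$ to that subvariety is still rational. Also note that the $\R^n$ stratification is not proved in \cite[5.4]{FMQ}; it is the regulous stratification theorem of \cite{FHMM} which is used there, so ``the ingredients of \cite[5.4]{FMQ}'' are not the right thing to re-examine.

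The missing idea, which is the whole content of the paper's (one-line) proof, is hereditary rationality: since $V$ is non-singular, a continuous rational function on $V$ is hereditarily rational, i.e.\ regulous (this is the theorem of Koll\'ar--Nowak \cite{KN}), and then the stratification into Zariski locally closed sets on which the function is regular is exactly \cite[Th\'eor\`eme 4.1]{FHMM} applied to this regulous function. Hereditary rationality is precisely what makes the Noetherian induction over proper subvarieties possible, and no amount of \L ojasiewicz-type or constructibility bookkeeping substitutes for it; if you want a self-contained argument you must either invoke that result or reprove it, not reduce to Zariski-local charts that do not exist.
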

\begin{proof}
 The proof is heavily based on the fact that, since $V$ is non-singular, $f$
 is hereditarily rational (regulous) and hence one may use \cite[Th\'eor\`eme 4.1]{FHMM}.
\end{proof}

Let us show now the desired substitution property for continuous rational functions: 
\begin{thm}\label{SubstitutionRegulue}
Let $V\subset\RR^n$ be a non-singular irreducible real algebraic variety. 
Then, $\Regz(V)$ satisfies the substitution property over $\R[V]$.

Moreover, the induced morphism $\spr \Regz(V)\to\spr \R[V]$ is an homeomorphism 
with respect to the constructible topology. 
\end{thm}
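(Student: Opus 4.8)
The plan is to prove existence and uniqueness of the lifting separately, using the tools assembled just above. Fix a real closed field $R$ and a morphism $\phi:\R[V]\to R$; it corresponds to an $R$-point $x_0$ of $V$, i.e. to a point of $V(R)$, and we must extend $\phi$ to $\Regz(V)$.

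First I would dispose of \emph{uniqueness}. If $f=p/q\in\Regz(V)$ with $p,q\in\R[V]$ coprime, then by Proposition 1.14 the set $\Z(q)$ has codimension at least $2$ in $V$; in particular $q\neq 0$ in $\R[V]$, and since $V$ is irreducible with real function field, $q$ is a nonzerodivisor that does not lie in every real prime. The key point is that $\phi$ is forced: on the Zariski-dense open set $V\setminus\Z(q)$ the function $f$ equals the regular function $p/q$, and any lifting $\tilde\phi$ must satisfy $\tilde\phi(q)\tilde\phi(f)=\tilde\phi(p)=\phi(p)$ and $\tilde\phi(q)=\phi(q)$. When $\phi(q)\neq 0$ this pins down $\tilde\phi(f)=\phi(p)/\phi(q)$. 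When $\phi(q)=0$, i.e. $x_0\in\Z(q)(R)$, one still gets uniqueness from continuity along an $R$-arc approaching $x_0$ inside $V\setminus\Z(q)$ (available because $\Z(q)$ has codimension $\ge 2$, hence does not locally disconnect $V$): the value of any lifting is the limit along that arc, which is intrinsic to $f$. Thus at most one lifting exists, and this also shows the induced map $\spr\Regz(V)\to\spr\R[V]$ is injective (cf. Proposition 1.5).

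For \emph{existence}, the strategy is exactly the argument of \cite[5.4]{FMQ}, now legitimate in our generality thanks to Theorem 1.16. Given $f\in\Regz(V)$, take the stratification $V=S_1\sqcup\cdots\sqcup S_m$ into Zariski locally closed subsets with $f|_{S_k}$ regular. The real point $x_0\in V(R)$ specializes into exactly one stratum $S_k$ (work over the real closure of $\mathrm{Frac}(\R[V]/\mathfrak p_{x_0})$ and use that the $\widetilde{S_k}$ partition $\spr\R[V]$). On $S_k$ the function $f$ is a quotient of polynomials whose denominator does not vanish identically on $S_k$, so $\phi$ restricted to $\R[S_k]$ evaluates it, giving a candidate value $\phi(f):=$ that evaluation. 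One then checks this assignment is a ring homomorphism $\Regz(V)\to R$ extending $\phi$: additivity and multiplicativity hold because, for finitely many functions at once, one can refine to a common stratification and argue on the single stratum containing $x_0$; compatibility across different strata is where the continuity of $f$ (the \L ojasiewicz-type estimate comparing $f$ on adjacent strata) is used, exactly as in \cite{FMQ}. This produces the required lifting, so $\Regz(V)$ satisfies the substitution property over $\R[V]$.

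Finally, for the homeomorphism statement: Proposition 1.6 already gives that $\spr\Regz(V)\to\spr\R[V]$ is a continuous bijection for the constructible topology, and the constructible topology is compact (spectral spaces), so a continuous bijection from a compact space to a Hausdorff space is a homeomorphism — one just needs that $\spr\R[V]$ with the constructible topology is Hausdorff, which is standard (\cite{BCR}). Hence the map is a homeomorphism for the constructible topology, completing the proof. The main obstacle I anticipate is the existence half: verifying that the stratum-wise evaluation is well-defined and multiplicative across strata, which rests on the continuity of $f$ together with the \L ojasiewicz inequality transported to the real closed field $R$ via Tarski's transfer principle, exactly the mechanism of \cite[5.4]{FMQ}.
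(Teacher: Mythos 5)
Your proposal has a genuine gap, and it sits exactly at the heart of the theorem: the uniqueness of the lifting when $\phi(q)=0$. You assert that ``the value of any lifting is the limit along an $R$-arc approaching $x_0$ inside $V\setminus\Z(q)$''. But a lifting $\tilde\phi:\Regz(V)\to R$ is merely an abstract $\R$-algebra homomorphism; nothing forces it to be compatible with limits or with continuity, so $\tilde\phi(f)$ is not a priori the limit of anything. (Note also that at such a point the relation $qf=p$ gives no constraint, since $\Z(q)\subset\Z(p)$ forces $\phi(p)=0$ as well, and the later sections of the paper show that this kind of ``the homomorphism must be the natural evaluation'' reasoning genuinely fails in nearby situations, e.g.\ along arcs on singular varieties.) The whole content of the proof is to convert the continuity of $f$ into algebraic identities inside $\Regz(V)$ that every homomorphism must respect: using Theorem \ref{ConstructibleStratification}, write the stratum containing the point $\alpha\in\spr\R[V]$ given by $\phi$ as $S_0=\{r=0,\ s\neq 0\}$ with $f=p/q$ regular on $S_0$; then $\Z(r)\subset\Z\bigl(s(qf-p)\bigr)$, so the \L ojasiewicz property yields $(s(qf-p))^N=rg$ with $g\in\Regz(V)$, and applying any lifting together with $\phi(r)=0$, $\phi(s)\neq 0$, $\phi(q)\neq 0$ pins down $\phi(f)=\phi(p)/\phi(q)$. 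Your argument supplies no such mechanism, so uniqueness is not established. A secondary flaw: you invoke Proposition \ref{codimzerosdenom} to get $\codim\Z(q)\geq 2$, but that proposition assumes $B$ already satisfies the substitution property, which is exactly what is being proved; the codimension fact is true for $\Regz(V)$ on a non-singular $V$, but it needs an independent argument, not that citation.

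The other two parts are essentially fine, though the existence half is over-engineered: since $\Regz(V)$ is a subring of the continuous semi-algebraic functions, for which substitution is known (Schwartz), the lifting is just the ``usual evaluation'' of the semi-algebraic extension of $f$ at the $R$-point corresponding to $\phi$; there is no need to rebuild the homomorphism stratum by stratum (and ironically this easy half is where you anticipated the difficulty). For the homeomorphism statement, your compactness argument (continuous bijection from a compact space onto a Hausdorff space, both real spectra being boolean in the constructible topology) is correct and is a clean alternative to the paper's route, which instead uses the stratification of Theorem \ref{ConstructibleStratification} to check directly that images of constructible sets are constructible.
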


\begin{proof}
We focus on the unicity of the factorization, since the existence comes from the usual evaluation. 
First, let us see what happens if $R=\R$. Then, one may assume, for simplicity, 
that the origin $o$ is in $V$ and that $\phi(x_1)=\ldots=\phi(x_n)=0$.
Let $f\in\Regz(V)$. Up to considering $f-f(o)$, one may assume that $f(o)=0$.\par
Then, by \L ojasiewicz property \cite[Proposition 2.6.4]{BCR} applied to $x_1^2+\ldots+x_n^2$ and $f$, 
there exists an integer $N$ and a continuous rational function $g$ in $\Regz(V)$ 
such that $f^N=(x_1^2+\ldots+x_n^2)g$. 
This algebraic identity implies that $\phi(f)=0$ 
which concludes the proof for the case $R=\R$.\par
Let us now consider a general real closed field $R$ and denote by $\alpha$ the point of the real spectrum
of $\R[V]$ given by the morphism $\phi$. 
According to theorem \ref{ConstructibleStratification}, we know that, 
for any $f\in\Regz(V)$, there is
a Zariski locally closed stratification $V=S_0\cup\ldots\cup S_m$ such that $f$ is regular on each $S_i$.
Let us assume that $\alpha\in \widetilde{S_0}$ ; if we 
set 
$$S_0=\{x\in V\mid r(x)=0,s(x)\not=0\}$$ 
where $r,s$ are polynomials, 
then $\alpha\in \widetilde{S_0}$ just means that
$\phi(r)=0$ and $\phi(s)\not =0$.\par
Then, the zero set of $r$ is contained in the zero set of $s\cdot (qf-p)$ where $p,q$ are polynomials 
such that $f=p/q$ is regular on $S_0$. Again, by \L ojasiewicz property one gets the existence of an integer $N$
and a continuous rational function $g$ such that $(s(qf-p))^N=rg$. Applying $\phi$, 
and since $\phi(s)\not=0$ and $\phi(q)\not=0$ (since 
$p/q$ is regular on $S_0$), one gets 
$\phi(f)=\phi(p)/\phi(q)$ which concludes the first part of the proof.

By proposition \ref{corresp}, the map $\varphi:\spr \Regz(V)\to\spr \R[V]$ is continuous and bijective.
Let us consider a constructible set $T$ in $\spr \Regz(V)$
which is a boolean combination of sets of the form $S=\{f=0,g>0\}$
where $f,g\in \Regz(V)$.
Again by theorem \ref{ConstructibleStratification}, one knows that 
there is
a Zariski locally closed stratification $V=S_0\cup\ldots\cup S_m$ 
such that $f$ and $g$ are regular on each $S_k$. 
Namely, one may write on $S_k$, $f=p_k/q_k$ and $g=r_k/s_k$ where
$p_k,q_k,r_r,s_k$ are polynomials such that $q_k$ and $s_k$ do not vanish on $S_k$.

Hence, the image of $S$ by $\varphi$
is then the union of all $\widetilde{S_k}\cap \varphi(S)$
which can be written as the subset $\widetilde{S_k} \cap\{p_k=0,r_ks_k>0\}$ 
in $\spr \R[V]$. Finally, one gets that $\varphi(T)$ is a 
constructible subset in $\spr \R[V]$. 
\end{proof}

The previous proposition says that if a $\R$-algebra homomorphism from $\Regz(V)$ is 
the evaluation at a given point
in restriction to the polynomials, then it is still the evaluation at this point on any  
continuous rational functions on $V$.\par  

So far it does not seem very clear how to get the substitution property for the 
ring of continuous rational functions on a singular 
variety since one no longer dispose of the  decomposition of rational functions
as regular functions on strata.

Although, one may obtain formally, by taking some quotients in a commutative diagram, that the ring 
of regulous functions on a singular variety $V$ 
satisfies the substitution property over $\RR[V]$. 
By definition, the ring of regulous functions on real algebraic variety $V\subset \R^n$, 
given by an ideal 
$I\subset \R[X_1,\ldots,X_n]$, is just the quotient ring
$\Rego(V)=\Regz(\R^n)/\I_{\Regz(\R^n)}(V)$ where $\I_{\Regz(\R^n)}(V)=\{f\in\Regz(\R^n)|\;V\subset\Z(f)\}$. 
Hence, one may easily derive the substitution property for the ring of regulous functions on $V$.
Namely:

\begin{thm}\label{SubstitutionRegulueSing}
Let $V\subset \R^n$ be a central irreducible real algebraic variety whose coordinate ring is 
$\R[V]=\R[X_1,\ldots,X_n]/I$. Then $\Rego(V)$ satisfies the substitution property over $\R[V]$.
\end{thm}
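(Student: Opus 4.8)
The plan is to deduce the statement from Theorem \ref{SubstitutionRegulue} applied to the \emph{ambient} affine space, by a quotient argument. Write $A=\R[X_1,\dots,X_n]$, let $I\subset A$ be the ideal with $\R[V]=A/I$, set $B=\Regz(\R^n)$ and $J=\I_{\Regz(\R^n)}(V)=\{f\in B\mid V\subset\Z(f)\}$, so that $\Rego(V)=B/J$ by definition. Since $\R^n$ is non-singular, Theorem \ref{SubstitutionRegulue} gives that $B$ satisfies the substitution property over $A$. The whole statement then reduces to the following claim: \emph{for every real closed field $R$ and every morphism $\phi\colon A\to R$ with $\phi(I)=0$, the unique lifting $\Phi\colon B\to R$ of $\phi$ provided by Theorem \ref{SubstitutionRegulue} satisfies $\Phi(J)=0$.} Granting this, the substitution property for $B/J$ over $A/I$ follows by a routine diagram chase: given $\psi\colon A/I\to R$, compose with $A\twoheadrightarrow A/I$ to get $\phi\colon A\to R$ (which kills $I$), lift it uniquely to $\Phi\colon B\to R$, observe that $\Phi$ kills $J$ by the claim and hence descends to $\Psi\colon B/J\to R$ lying above $\psi$; and any other lifting $\Psi'$ of $\psi$, composed with $B\twoheadrightarrow B/J$, would be a lifting of $\phi$, hence equal to $\Phi$, which forces $\Psi'=\Psi$ since $B\to B/J$ is onto.

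To prove the claim, fix $f\in J$ and run the argument of Theorem \ref{SubstitutionRegulue} inside the non-singular variety $\R^n$. By Theorem \ref{ConstructibleStratification} there is a stratification $\R^n=S_0\cup\dots\cup S_m$ into Zariski locally closed subsets with $f|_{S_k}$ regular, and, exactly as in the proof of Theorem \ref{SubstitutionRegulue}, one may write $f=p_k/q_k$ on $S_k$ with $q_k$ nowhere zero on $S_k$. Let $\alpha\in\spr A$ be the point carried by $\phi$; since $\phi(I)=0$ we have $\alpha\in\widetilde V$, and $\alpha$ lies in $\widetilde{S_k}$ for some index $k$. As established in the proof of Theorem \ref{SubstitutionRegulue}, for that $k$ one has $\phi(q_k)\neq 0$ and $\phi(q_k)\,\Phi(f)=\phi(p_k)$. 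Now $f$ vanishes on all of $V$, so on $V\cap S_k$ the regular function $p_k/q_k$ vanishes while $q_k$ does not, whence $V\cap S_k\subset\Z(p_k)$ as semialgebraic subsets of $\R^n$. Passing to the associated constructible subsets of $\spr A$ and using the compatibility of $\widetilde{\phantom{V}}$ with intersections, $\widetilde V\cap\widetilde{S_k}=\widetilde{V\cap S_k}\subset\{p_k=0\}$. Since $\alpha\in\widetilde V\cap\widetilde{S_k}$, this gives $\phi(p_k)=p_k(\alpha)=0$, hence $\phi(q_k)\,\Phi(f)=0$ and, as $\phi(q_k)\neq0$, $\Phi(f)=0$. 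This proves the claim, and with it the theorem; moreover Proposition \ref{corresp} then yields as usual that $\spr\Rego(V)\to\spr\R[V]$ is bijective and continuous.

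The step I expect to need the most care is precisely the claim $\Phi(J)=0$. It is tempting to hope this is purely formal — e.g.\ that $J$ is generated over $B$ by the polynomials of $I$ — but this fails in general for singular $V$, since a regulous function vanishing on $V$ is only governed by a regulous radical which is strictly larger than $I\cdot B$. So one genuinely has to use the explicit shape of the lifting $\Phi$ coming from the constructible stratification of $f$ on the non-singular ambient space, together with the dictionary between semialgebraic subsets of $\R^n$ and constructible subsets of $\spr A$; once that is in hand, everything else is the formal quotient argument in the commutative square $A\to B$, $A/I\to B/J$ described above.
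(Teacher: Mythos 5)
Your proof is correct and follows essentially the same route as the paper: reduce to the non-singular case Theorem \ref{SubstitutionRegulue} for $B=\Regz(\R^n)$ over $\R[X_1,\ldots,X_n]$ and descend through the quotients by $I$ and $\I_{\Regz(\R^n)}(V)$. The only difference is that you spell out the one point the paper merely asserts, namely that the unique lifting $\Phi:B\to R$ kills $\I_{\Regz(\R^n)}(V)$ -- your stratification argument for this is valid (it could also be obtained more quickly by applying the \L ojasiewicz property to $f$ and a sum of squares of generators of $I$, whose zero set is $V$), and the surrounding diagram chase matches the paper's.
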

\begin{proof}
Let $A=\R[X_1,\ldots,X_n]$, $B=\Rego(\R^n)$ and $\I=\I_{\Regz(\R^n)}(V)$.

The following commutative diagram is a consequence of the universal property for quotient rings :
$$\begin{array}{ccccc}
A&\rightarrow&A/I&\rightarrow&  R\\
\downarrow&&\downarrow&\nearrow\\
B&\rightarrow&B/\I&\\
\downarrow&&\\
\R(V)&&&\\
\end{array}$$

Namely, given an evaluation morphism 
$A/I\rightarrow R$ it gives rise to a unique morphism $A\rightarrow R$ which sends $I$ onto $0$.
Hence, by the substitution property for $B$, one has an unique lifting 
morphism $B\rightarrow R$ which send $\I$ onto $0$. Hence, by the universal property for quotient rings, one has 
a unique factorization $B/\I\rightarrow R$.
\end{proof}


\subsection{The substitution property
does not characterize continuous rational functions}
We show that the ring of continuous rational functions is not the
biggest subring of rational functions that satisfies the substitution 
property and hence, the substitution property is not a characterization for the ring of continuous rational functions. This result will lead to the notion of substitution along arcs in the next section for which such a characterization will be available.\par 
But for the classical substitution property, already in the case of the plane one has:

\begin{prop}\label{NonMaximalSubstitutionRationalCont}
The ring $\Regz(\R^2)$ is not maximal in $\R(x,y)$ to satisfy the substitution property.
\end{prop}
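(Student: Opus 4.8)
The plan is to produce an explicit ring $B$ with $\Regz(\R^2)\subsetneq B\subsetneq\R(x,y)$ that still has the substitution property over $\R[x,y]$. I would fix a line through the origin, say the $x$-axis, and set
\[
B=\Big\{\,f\in\R(x,y)\ :\ f\ \text{is continuous on }\R^2\setminus\{0\}\ \text{ and }\ \lim_{t\to 0}f(t,0)\in\R\,\Big\}.
\]
The first condition just says that the polar locus of $f$ (as a rational function) is contained in $\{0\}$; the second one is what excludes functions such as $1/(x^2+y^2)$ that blow up in every direction at the origin. Both conditions are stable under $+$, $-$, $\times$ — the polar locus of $f\pm g$ or $fg$ lies in the union of the polar loci, and one--variable rational functions with a finite limit at $0$ form a ring — so $B$ is a subring of $\R(x,y)$ containing $\Regz(\R^2)$. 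The inclusion is strict, since $\frac{xy}{x^2+y^2}\in B\setminus\Regz(\R^2)$, and $B\neq\R(x,y)$, since $\frac1x\notin B$. Finally, $\varepsilon\colon B\to\R$, $f\mapsto\lim_{t\to 0}f(t,0)$, is a ring homomorphism extending evaluation at the origin on $\R[x,y]$; write $\mathfrak n=\ker\varepsilon$, a maximal ideal with $B/\mathfrak n=\R$.

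Next I would verify the substitution property for $B$. Away from the origin it is the elementary observation already used in the paper: if $\phi\colon\R[x,y]\to R$ corresponds to a point $\alpha$ of $\spr\R[x,y]$ different from the origin, then for $f=p/q\in B$ in lowest terms the denominator $q$ has $\Z(q)\subseteq\{0\}$, so (arguing with heights and the change of sign criterion exactly as in the earlier codimension proposition) $q\notin\supp\alpha$, whence $\phi(q)\neq0$ and $f\mapsto\phi(p)/\phi(q)$ is the one and only lift. Thus the whole problem is concentrated at a homomorphism $\phi$ with $\phi(x)=\phi(y)=0$. There a lift exists, namely $\varepsilon$ followed by $\R\hookrightarrow R$. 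For uniqueness, any lift $\psi\colon B\to R$ has $\psi(x)=\psi(y)=0$, hence annihilates $(x,y)B$ and factors through $B/(x,y)B$; so it suffices to show that $\spr\bigl(B/(x,y)B\bigr)$ is a single point. I reduce this to the key claim $\mathfrak n^{2}\subseteq(x,y)B$: granting it, $\mathfrak n/(x,y)B$ is a square-zero (hence nil) ideal of $B/(x,y)B$, so $B/(x,y)B$ and its reduction $B/\mathfrak n=\R$ have the same real spectrum, which is a point. (Here $(x,y)B$ is proper, because $\varepsilon$ kills it.)

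The real work, and the step I expect to be the main obstacle, is the key claim $\mathfrak n^2\subseteq(x,y)B$. Take $f=p/q$ and $g=p'/q'$ in $\mathfrak n$, in lowest terms. Since $\varepsilon(f)=\varepsilon(g)=0$ we get $p(0,0)=p'(0,0)=0$ (immediate when $q(0,0)\neq0$; when $q(0,0)=0$ it already follows from finiteness of $f(t,0)$ at $t=0$), so $pp'\in(x,y)^2$ and we may write $pp'=x^2C_1+xyC_2+y^2C_3$ with $C_i\in\R[x,y]$, giving
\[
fg=x\cdot\frac{xC_1+yC_2}{qq'}\;+\;y\cdot\frac{yC_3}{qq'}.
\]
It remains to check that the two coefficients lie in $B$. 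Their polar loci are automatically contained in $\{0\}$, so only the finiteness of the $x$-axis limits is at stake: for $\frac{yC_3}{qq'}$ the numerator vanishes on the $x$-axis, so the limit is $0$; and for $\frac{xC_1+yC_2}{qq'}$, using polynomial decompositions $p=xp_1+yp_2$, $p'=xp_1'+yp_2'$ one has $p(t,0)=t\,p_1(t,0)$, hence $C_1(t,0)=p_1(t,0)p_1'(t,0)$ and $\frac{xC_1}{qq'}(t,0)=\frac1t\,f(t,0)g(t,0)$, which tends to $0$ because $f(t,0)$ and $g(t,0)$ each vanish at least to first order in $t$. Therefore $fg\in(x,y)B$, which establishes $\mathfrak n^2\subseteq(x,y)B$ and completes the proof. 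The delicate points are the bookkeeping with orders of vanishing along the $x$-axis — needed to certify that the auxiliary coefficients really land in $B$ — and the global check that this single construction avoids both failure modes at the origin: non-uniqueness, handled by the key claim, and non-existence, ruled out by the finite-limit condition built into $B$.
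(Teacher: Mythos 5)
Your strategy is genuinely different from the paper's (which adjoins the functions $x/(x^2+y^2)^n$ to $\Regz(\R^2)$ and analyses algebraic relations at the origin), and the core idea --- a ring cut out by a finite limit along one fixed line, with uniqueness at the origin obtained from $\mathfrak n^2\subseteq (x,y)B$ and the reducedness of real closed fields --- is attractive. But as written there is a genuine gap: you identify ``$f$ is continuous on $\R^2\setminus\{0\}$'' with ``the lowest-terms denominator $q$ of $f$ satisfies $\Z(q)\subseteq\{0\}$'', and this equivalence is false. For instance $\frac{(x-1)^3}{(x-1)^2+y^2}$ belongs to $\Regz(\R^2)\subseteq B$, is in lowest terms, and yet its denominator vanishes at $(1,0)$. (If instead you defined $B$ by the condition $\Z(q)\subseteq\{0\}$, then $B$ would no longer contain $\Regz(\R^2)$ and the non-maximality statement would not be addressed.) This false identification is used in two essential places. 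First, in the case ``away from the origin'': when $\supp\alpha$ is the real maximal ideal of an isolated real zero $a\neq 0$ of some denominator, one has $\phi(q)=0$, so the recipe $f\mapsto\phi(p)/\phi(q)$ is undefined and your argument gives neither existence nor uniqueness at $\alpha$. Those points need a separate treatment, e.g.\ the \L ojasiewicz argument of Theorem \ref{SubstitutionRegulue} (such $\phi$ factor through $\R$, $f$ is continuous near $a$, and an identity $(f-f(a))^N=((x-a_1)^2+(y-a_2)^2)\,g$ with $g\in B$ forces $\psi(f)=f(a)$), or the observation that every $f\in B$ can be written as $g/(x^2+y^2)^N$ with $g\in\Regz(\R^2)$, which lets you quote Theorem \ref{SubstitutionRegulue} directly whenever $\phi(x^2+y^2)\neq 0$.

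Second, in the key claim, the assertion that the coefficients $\frac{xC_1+yC_2}{qq'}$ and $\frac{yC_3}{qq'}$ lie in $B$ ``automatically'' is exactly what is in question: their continuity at isolated real zeros of $qq'$ away from the origin is not automatic and can fail for admissible choices in your construction. For example, take $f=g=\frac{y(x-1)^3}{(x-1)^2+y^2}\in\mathfrak n$ and the permitted splitting $p=xp_1+yp_2$ with $p_1=y$, $p_2=-x+(x-1)^3$; then $\frac{xC_1+yC_2}{qq'}=\frac{y^2(-x+2(x-1)^3)}{((x-1)^2+y^2)^2}$, which is unbounded near $(1,0)$, hence not in $B$. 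The claim $\mathfrak n^2\subseteq(x,y)B$ is nevertheless true, and your order-of-vanishing computation along the axis is the right one; a clean fix is to decompose $fg=x\cdot\frac{x\,fg}{x^2+y^2}+y\cdot\frac{y\,fg}{x^2+y^2}$, whose coefficients are continuous off the origin (products of continuous functions over a non-vanishing denominator) and have zero limit along the $x$-axis because $f(t,0)$ and $g(t,0)$ vanish to order at least one. Finally, a smaller logical point: ``$\spr(B/(x,y)B)$ is a single point'' does not by itself give uniqueness of the lift, since distinct morphisms into the same real closed field can define the same point of the real spectrum; what your square-zero claim actually provides, and what you should invoke, is that any lift kills $\mathfrak n$ (its values are square-zero modulo $\ker\psi$ in a field) and therefore factors through $B/\mathfrak n=\R$, for which the $\R$-algebra morphism into $R$ is unique.
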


Before entering into the details of the proof, we will state as an intermediary step that there exist bigger rings that satisfy only the 
{\it existence} condition of the lifting in the substitution property.
Namely,

\begin{prop}\label{NonMaxReg}
The ring $\Regz(\R^2)$ is not maximal in $\R(x,y)$ among rings satisfying the existence condition of the lifting 
in the substitution property.
Moreover, there does not exist a unique maximal ring satisfying the existence condition of the lifting 
in the substitution property.
\end{prop}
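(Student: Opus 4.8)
The plan is to exhibit explicitly two rational functions $f_1, f_2 \in \R(x,y)$ such that neither lies in $\Regz(\R^2)$, yet each of $\Regz(\R^2)[f_1]$ and $\Regz(\R^2)[f_2]$ satisfies the existence condition of the lifting. The natural candidate is a function of the form $f = p/q$ where $\Z(q)$ has codimension one — so that $f \notin \Regz(\R^2)$ by Proposition \ref{codimzerosdenom} (or rather, $f$ fails to extend continuously along $\Z(q)\setminus\Z(p)$) — but where the behaviour of $f$ near $\Z(q)$ is controlled enough that every evaluation morphism $\phi:\R[x,y]\to R$ still has \emph{some} lifting. A concrete example is $f_1 = x/y$: for a morphism $\phi$ with $\phi(y)\neq 0$ one simply sets $\phi(f_1) = \phi(x)/\phi(y)$, and for a morphism with $\phi(y) = 0$ (hence $\phi$ factors through a point on the line $y=0$), one can \emph{choose} a value for $\phi(f_1)$ freely in $R$ — the point being that no algebraic identity in $\Regz(\R^2)[f_1]$ forces a contradiction, because $y f_1 - x = 0$ is the only relation and it is satisfied whatever value we assign when $\phi(y)=\phi(x)=0$. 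One must check that such a choice extends consistently to all of $\Regz(\R^2)[f_1]$; this reduces to verifying that $\Regz(\R^2)[f_1]$ is (the localization-quotient presentation of) a ring in which, after killing the prime corresponding to $\alpha$, the image of $f_1$ can be any element of the residue field's real closure — i.e. that $f_1$ is transcendental or at least unconstrained modulo that prime. The existence of \emph{at least one} lifting is then clear; uniqueness is exactly what fails, which is consistent with the statement.

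The second half — that there is no unique maximal ring with the existence property — I would prove by producing two such rings $B_1 = \Regz(\R^2)[f_1]$ and $B_2 = \Regz(\R^2)[f_2]$ whose compositum $B_1 \cdot B_2$ inside $\R(x,y)$ does \emph{not} satisfy the existence property, so that neither can be contained in a common maximal one in a way that would make the maximal ring unique. The cleanest choice is $f_1 = x/y$ and $f_2 = (x-1)/y$, or $f_1 = x/y$ and $f_2 = x/(y(y-1))$ — something whose denominators force, in the compositum, an algebraic relation that \emph{cannot} be satisfied by any choice at a bad point. For instance, with $f_1 = x/y$ and $f_2 = y/x$ we have $f_1 f_2 = 1$ in the compositum, yet at the origin both $\phi(x)=\phi(y)=0$, and the relation $f_1 f_2 = 1$ together with $y f_1 = x$, $x f_2 = y$ forces $\phi(f_1)\phi(f_2)=1$ with $\phi(f_1)\cdot 0 = 0$, $\phi(f_2)\cdot 0 = 0$: this is actually still satisfiable (take $\phi(f_1)=\phi(f_2)=1$), so I would instead pick relations of the form $y f_1 = x$ and $y^2 f_2 = x$ with $f_2 = f_1 \cdot (1/y)$, forcing $\phi(f_2)\phi(y) = \phi(f_1)$, i.e. $0 = \phi(f_1)$, while simultaneously another branch forces $\phi(f_1)\neq 0$. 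The main obstacle is precisely this bookkeeping: choosing the two functions so that \emph{individually} every evaluation lifts (no obstruction to existence) but the compositum carries a genuinely unsatisfiable system of algebraic identities at some point of $\spr\R[x,y]$, and verifying both claims carefully over an arbitrary real closed field $R$, not just over $\R$.

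A cleaner route to the non-uniqueness, which I would present as the primary argument, is this: by Zorn's lemma every ring with the existence property is contained in a \emph{maximal} such ring; if there were a \emph{unique} maximal one, call it $B_{\max}$, it would have to contain every $\Regz(\R^2)[f]$ that has the existence property, hence contain both $f_1$ and $f_2$ above, hence contain their compositum — contradicting that the compositum fails the existence property. So it suffices to establish (i) $\Regz(\R^2)[f_i]$ has the existence property for $i=1,2$, and (ii) $\Regz(\R^2)[f_1,f_2]$ does not. Step (i) is the transcendence/free-choice argument sketched above; step (ii) is the explicit incompatible system at a chosen point. I expect step (i) to be the delicate one, since showing that \emph{every} morphism into \emph{every} real closed field lifts requires ruling out hidden algebraic constraints imposed by the full ring $\Regz(\R^2)$ of regulous functions on the adjoined element, and here one wants to invoke that a regulous function which vanishes on the dense set $\{y\neq 0\}$ vanishes identically, so that no nontrivial relation among regulous functions survives to constrain $f_i$ beyond the single evident polynomial relation.
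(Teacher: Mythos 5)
Your choice of $f_1=x/y$ breaks the argument at the very first step: the existence condition already fails for $\Regz(\R^2)[x/y]$. In that ring one has the relation $y\cdot f_1=x$, so any lifting of the evaluation morphism at the point $(1,0)$ (where $\phi(y)=0$ but $\phi(x)=1$) would give $0=\phi(y)\phi(f_1)=\phi(x)=1$, a contradiction: there is no lifting at all. Your claim that when $\phi(y)=0$ one can choose $\phi(f_1)$ freely only treats the case $\phi(x)=\phi(y)=0$ and overlooks every point of $\Z(y)$ away from the origin. This is precisely the content of the proposition preceding Proposition \ref{codimzerosdenom} in the paper: even the existence half of the substitution property forces $\Z(q)\subset\Z(p)$ for every $f=p/q$ in the ring (apply a lifting to the identity $qf=p$), so any admissible adjoined function must have its polar locus contained in the zero locus of its numerator. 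All of your candidates ($x/y$, $(x-1)/y$, $x/(y(y-1))$, $y/x$) violate this, so both halves of your plan collapse. The paper instead adjoins $f=\frac{x}{x^2+y^2}$, whose denominator vanishes only at the origin, where the numerator vanishes too; the delicate point, which you flag but do not carry out, is to show that assigning a value $\alpha$ to $f$ at the origin is compatible with every relation $a_0+a_1f+\cdots+a_nf^n=0$ with $a_i\in\Regz(\R^2)$. The paper does this by a valuation argument with respect to the real prime $(x)$ (resp.\ $(x-\alpha(x^2+y^2))$ for general $\alpha$), concluding $a_0(0)=0$; your appeal to the fact that a regulous function vanishing on $\{y\neq 0\}$ vanishes identically does not substitute for this step.

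For the non-uniqueness, your Zorn's-lemma framing is sound in principle and matches the logic of the paper's conclusion, but it needs a working pair of functions. The paper takes $f=\frac{x}{x^2+y^2}$ and $g=\frac{y}{x^2+y^2}$: each individually generates a ring with the existence property by the valuation argument above, while the identity $xf+yg=1$, evaluated by any lifting of the evaluation at the origin, gives $1=0$, so no ring with the existence property can contain both. Your own attempts either fail individually (as above) or, as you observe yourself for the pair $x/y$, $y/x$, yield relations that are still satisfiable, so the required incompatibility is never established.
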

\begin{proof}
For the first point, let us show that the ring $\Regz(\R^2)[\frac{x}{x^2+y^2}]$ 
satisfies also the existence condition of the lifting 
in the substitution property.\par
It suffices to show that 
the evaluation morphism 
at the origin $\phi:\Regz(\R^2)\rightarrow \RR$
can be lifted to $\Regz(\R^2)[\frac{x}{x^2+y^2}]$. 
The first step in the proof is to show that one may set 
$\phi\left(\frac{x}{x^2+y^2}\right)=0$.\par

Consider the morphism  
$
\Regz(\R^2)[T]\stackrel{\psi}{\longrightarrow}\R(x,y)$
defined by $T\mapsto \frac{x}{x^2+y^2}$.
One has 
$$\Regz(\R^2)[\frac{x}{x^2+y^2}]\simeq \Regz(\R^2)[T]/{\rm Ker\,} \psi.$$ 

Let $P\in {\rm Ker\,} \psi$, and write
$P=a_0+\ldots+a_n T^n$ where the $a_i$'s are continuous rational functions on $\R^2$. 
In $\R(x,y)[T]$, one may factorize $P$ by $T-p/q$ where $p=x,q=x^2+y^2$. Hence
$$\frac{(qT-p)(b_0+\ldots+b_{n-1}T^{n-1})}{d}=a_0+\ldots+a_n T^n$$
where the $b_i$'s and $d$ are polynomials in $\R[x,y]$.\par
Then,
$$\leqno{(*)}
\left\{\begin{array}{lcl}
-b_0p&=&a_0d\\
qb_0-b_1p&=&a_1d\\
\ldots\\
qb_{n-2}-b_{n-1}p&=&a_{n-1}d\\
qb_{n-1}&=&a_nd\\
\end{array}
\right.$$

Let $r$ be the valuation of $d$ with respect to the prime $p=x$, namely $v_p(d)=r$. 
Since $p$ and $q$ are coprime, one gets from the last identity in $(*)$ 
that $v_p(b_{n-1})\geq r$ since $(p)$ is a real prime ideal. 
Likewise, 
$v_p(b_{n-2})\geq r$ and more generally $v_p(b_{i})\geq r$ for $i=0,\ldots,n-1$.
The first identity in $(*)$ implies that we may write $a_0=h/k$
with $h,k\in\R[x,y]$,
$v_p(a_0)\geq 1$ and $v_p(k)=0$. It follows that $a_0$ vanishes on
a non-empty Zariski open subset of $\Z(x)$ and thus on whole $\Z(x)$. Hence $a_0(0)=0$.\par
Hence, the setting $\phi\left(\frac{x}{x^2+y^2}\right)=0$ is compatible with any algebraic 
relation verified by $f=\frac{p}{q}=\frac{x}{x^2+y^2}$ over $\Regz(\R^2)$.
It defines then a lifting of $\phi$ to $\Regz(\R^2)[\frac{x}{x^2+y^2}]$.\par

Note that, if one set $\phi_{\alpha}\left(\frac{x}{x^2+y^2}\right)=\alpha\in\RR$, 
one may replace $f$ with $g=f-\alpha$, namely replace
$x$ with $p=x-\alpha (x^2+y^2)$ which remains real prime. 
One can then repeat the previous process to get that $a_0(0)=0$ 
for any algebraic relation $a_0+\ldots+a_ng^n=0$ where the $a_i$'s are continuous rational
functions. Since $\phi_{\alpha}(g)=0$, again one has a lifting of $\phi_{\alpha}$ to 
$\Regz(\R^2)[\frac{x}{x^2+y^2}]$.
Hence, one has infinitely many liftings of $\phi$ to $\Regz(\R^2)[\frac{x}{x^2+y^2}]$.
\vskip 0,2cm

Let us show now the second point. Set $f=\frac{x}{x^2+y^2}$ and $g=\frac{y}{x^2+y^2}$.
Let $B_1$ be a ring that contains $\Regz(\R^2)[f]$ and that satisfies the existence 
in the substitution property. 
Then $g$ cannot belong to $B_1$. Indeed, from the identity $xf+yg=1$ we get, using that $\phi$ is the evaluation at the origin, that
$$1=\phi(xf+yg)=\phi(x)\phi(f)+\phi(y)\phi(g)=0.$$

Likewise, let $B_2$ be a ring that contains $\Regz(\R^2)[g]$
and that satisfies the existence 
in the substitution property. 
As previously,  one has $f \notin B_2$.

Then, there does not exist a unique maximal ring satisfying the existence condition of the lifting 
in the substitution property.  
\end{proof}

The proof of Proposition \ref{NonMaximalSubstitutionRationalCont} is obtained by 
slightly modifying the above proof.

\begin{proof}[Proof of Proposition \ref{NonMaximalSubstitutionRationalCont}]
Let us consider the ring 
$$B=\Regz(\R^2)[\frac{x}{(x^2+y^2)^n},n\in\N^*].$$ 
Setting $f_n=\frac{x}{(x^2+y^2)^n}$, one has $f_n=f_{n+1}(x^2+y^2)$. 
Let us note that it suffices to consider the evaluation morphism at the origin
$A=\R[x,y]\stackrel{\phi}{\rightarrow} \RR$,
and show that it admits one and only one lifting to $B$.\par

If such a lifting (again denoted by $\phi$) of the evaluation at the origin exists, it satisfies necessarily  
$\phi(f_n)=\phi(f_{n+1})\phi((x^2+y^2))$. Since $\phi(x^2+y^2)=0$, it follows that $\phi(f_n)=0$ for any $n$. This shows the unicity of the lifting.\par
To show the existence, let us consider an algebraic relation of the form $P(f_1,\ldots,f_n)=0$
where $P=a_0+\sum_{\alpha>0}a_{\alpha}x^{\alpha_1}\ldots x^{\alpha_n}$ 
is a polynomial whose coefficients are continuous rational functions on $\R^2$.
The relation $P(f_1,\ldots,f_n)=0$ can be rewritten as $$a_0+\sum_{\alpha>0}a_{\alpha}
\frac{x^{\alpha_1+\ldots+\alpha_n}}{(x^2+y^2)^{\alpha_1+2\alpha_2+\ldots+n\alpha_n}}=0,$$
which implies
$$a_0(x^2+y^2)^N=-\sum_{\alpha>0}a_{\alpha}(x^2+y^2)^{N(\alpha)}x^{\alpha_1+\ldots+\alpha_n}$$
where $N$ and $N(\alpha)$ are integers.

From the previous identity, it follows that $a_0$ vanishes on $\Z(x)$
and one gets $\phi(a_0)=0$.
Hence $\phi(P(f_1,\ldots,f_n))=0$ as desired.
\end{proof}

In summary, the substitution property as defined in \ref{SubstiutionPonctuelle} 
is not strong enough to characterize continuous rational
functions. More precisely, the lifting property is not sufficient to implies continuity, even if the evaluation morphism 
can be taken over a Puiseux series field. 
For instance, we may consider a rational function $f=p/q$ on the plane
such that $p(0)=q(0)=f(0)=0$, 
and an algebraic plane curve at the origin parametrized by two Puiseux series 
$\gamma=(\alpha(t),\beta(t))$ which can also be seen as an element of the real spectrum of $\RR[x,y]$.
If $\gamma$ goes to the origin by assumption, it seems not obvious to deduce that $f(\gamma)$ 
also goes to the origin.\par
These considerations lead us to consider a new substitution property along (convergent) arcs in the remaining of the paper.

\section{Substitution along arcs}\label{sect-arcs}
The ring of all formal power series over $\R$ in the indeterminate $t$ will be denoted by $\R[[t]]$. 
The ring of all formal power series over $\R$ in the indeterminate $t$ which are algebraic over $\R[t]$
will be denoted by $\R[[t]]_{\rm alg}$.

We consider moreover several power series rings which are all subrings of the field of Puiseux power series.
The field of all Puiseux power series over $\R$ in the indeterminate $t$ will be denoted by 
$\Fpui$, and its valuation ring by $\Rpui$. The subfield of Puiseux series which are algebraic over $\R[t]$ will de denoted by $\Fpuialg$, and its valuation ring by $\Rpuialg$. From a geometric point of view, an element in the field $\Fpuialg$ is identified with a continuous semi-algebraic functions germ $(0,+\infty)\to \R$ on the right at the origin, the ring $\Rpuialg$ consisting of those germs which can be extended continuously at the origin \cite{BCR}[Section 8.1].

We will use frequently in the sequel the following identification between arcs traced on a variety $V$, and $\R$-algebra morphisms
$\R[V]\rightarrow \R[[t]]_{\rm alg}$. More precisely, if $V\subset\R^n$ is an algebraic variety defined by the polynomial equations 
$f_1=\ldots=f_r=0$, an algebraic formal arc $\gamma(t)=(\gamma_1(t),\ldots,\gamma_n(t))\in
(\R[[t]]_{\rm alg})^n$ traced on the variety $V$ satisfies $f_1(\gamma(t))=\ldots=f_r(\gamma(t))=0$. In other words,
an algebraic formal arc on $V$ is given by an $\R$-algebra morphism
$\R[V]\rightarrow \R[[t]]_{\rm alg}$.

\subsection{Definition and first properties}
We state a substitution property along algebraic formal arcs and algebraic Puiseux arcs.

\begin{defn}\label{SubstiutionArc}
Let $A$ be an $\R$-algebra and $B$ be an $A$-algebra. 
We say that $B$ has the substitution property over $A$ along algebraic formal arcs (respectively algebraic Puiseux arcs) if  
any morphism $A\rightarrow \R[[t]]_{\rm alg}$ 
(resp. $\Rpuialg$) admits one and only one 
lifting to $B$.
\end{defn}

Again, we mainly consider the case where $A$ is a ring of polynomials functions. 
One has the substitution property along algebraic Puiseux arcs for the ring of Nash functions 
and the ring of continuous semi-algebraic functions.

\begin{prop}
Let $A=\R[V]$ be the coordinate ring of an non-singular real algebraic variety
$V\subset \R^n$. Let $B$ be either the ring of semi-algebraic functions on $V$ or 
the ring of Nash functions on $V$. 
Then $B$ satisfies the substitution property along Puiseux arcs.
\end{prop}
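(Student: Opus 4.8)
The plan is to reduce the statement to the ordinary substitution property (Theorem~\ref{SubstitutionRegulue} for Nash functions, and the real-closed-ring machinery of Schwartz for continuous semi-algebraic functions) by observing that an $\R$-algebra morphism $A=\R[V]\to\Rpuialg$ is nothing but a point of $V$ with coordinates in the real closed field $R=\Fpuialg$ which moreover lands in the valuation ring. Indeed, giving such a morphism $\phi$ is the same as giving a morphism $A\to\Fpuialg$, i.e.\ an $R$-point $x_0$ of $V(R)$, together with the extra datum that $\phi(X_i)\in\Rpuialg$ for all $i$, equivalently that $x_0$ lies ``bounded'' / specializes to a genuine $\R$-point as $t\to 0^+$. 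So the existence and uniqueness of a lifting $B\to\Fpuialg$ is already furnished by the substitution property over the real closed field $\Fpuialg$; the only thing to check is that this lifting actually takes values in the sub\emph{ring} $\Rpuialg$, not merely in the field $\Fpuialg$.

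First I would set up the correspondence precisely: fix $\phi:A\to\Rpuialg\subset\Fpuialg$, let $R=\Fpuialg$ (a real closed field), and invoke the known substitution property of $B$ over $A$ with respect to this $R$ to obtain the unique $\R$-algebra morphism $\tilde\phi:B\to R$ extending $\phi$. Uniqueness as a morphism to $\Rpuialg$ is immediate from uniqueness as a morphism to $\Fpuialg$. For the existence part it remains to prove $\tilde\phi(B)\subset\Rpuialg$. Here I would use the geometric interpretation recalled in the excerpt: an element of $\Fpuialg$ is a semi-algebraic function germ $(0,\varepsilon)\to\R$, and it lies in $\Rpuialg$ precisely when it extends continuously at $0$, equivalently when it is bounded near $0$. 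Given $f\in B$, the germ $\phi$ corresponds to a semi-algebraic arc $\gamma:(0,\varepsilon)\to V(\R)$ that extends continuously to $\gamma(0)=x_0\in V(\R)$ (this is exactly what $\phi(X_i)\in\Rpuialg$ says), and $\tilde\phi(f)$ is the germ $t\mapsto f(\gamma(t))$. Since $f$ is continuous on $V$ (both Nash functions and continuous semi-algebraic functions are continuous on all of $V(\R)$) and $\gamma$ extends continuously to $x_0$, the composite $f\circ\gamma$ extends continuously to the value $f(x_0)$; hence $\tilde\phi(f)\in\Rpuialg$, as needed. One should also record that $\tilde\phi$ restricted to $A$ agrees with $\phi$ by uniqueness, so the diagram commutes.

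The one subtlety — and the step I would treat most carefully — is the compatibility between the purely algebraic lifting $\tilde\phi:B\to\Fpuialg$ produced by the abstract substitution property and its concrete description as ``evaluation along the arc $\gamma$''. For the ring of continuous semi-algebraic functions this is where Schwartz's real closed ring theory (or the Tarski transfer principle applied to the first-order formula expressing continuity) does the work: the value $\tilde\phi(f)$ must agree with the germ of $f\circ\gamma$ because any algebraic relation satisfied by $f$ over $A$ is satisfied by $f\circ\gamma$ over $\phi(A)$, and $f\circ\gamma$ is already a semi-algebraic germ, i.e.\ already an element of $\Fpuialg$ over which the lifting is unique. For Nash functions the same matching follows from the substitution property of Theorem~\ref{SubstitutionRegulue} combined with the fact that a Nash function composed with a semi-algebraic arc is again such a germ. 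Once this identification is in place, boundedness of $f\circ\gamma$ near $0$ — hence membership in $\Rpuialg$ — is automatic from continuity of $f$ at $x_0$, and the proposition follows.
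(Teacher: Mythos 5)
Your proof is correct and follows essentially the same route as the paper: existence is the natural evaluation $f\mapsto f\circ\gamma$, which lands in $\Rpuialg$ because $f$ is continuous and the arc extends continuously to $t=0$, and uniqueness is obtained by embedding $\Rpuialg$ into the real closed field $\Fpuialg$ and invoking the known pointwise substitution property of $B$. One minor slip: for Nash functions that pointwise property is \cite[8.5.2]{BCR} rather than Theorem \ref{SubstitutionRegulue} (which concerns continuous rational functions), but this does not affect the argument.
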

\begin{proof}
First of all, note that since all the possible rings $B$ being subrings of the ring of semi-algebraic continuous functions on $V$,
the evaluation along an arc given by $A\rightarrow \Rpuialg$
can always be lifted to $B$ by considering the usual composition of semi-algebraic arcs.\par
To prove the uniqueness, recall that 
$\Rpuialg$ is included into the field of algebraic Puiseux 
series $\Fpui$ which is a real closed field. 
Then, let us use the fact that
the ring $B$ satisfies the substitution property on points 
(as it has been recalled in subsection \ref{DefAndfirstsexamples}). 
Since the composite morphism $A\rightarrow \Rpuialg
\rightarrow \Fpuialg$ admits a unique lifting to 
$B$, so it is the case for 
a given morphism $A\rightarrow \Rpuialg$.
\end{proof}

In the sequel, we will mainly consider rings $B$ which are subrings of the field
of rational functions of an irreducible variety, and also rings containing the ring of regular functions (since those later 
obviously satisfies the substitution property over arcs).

\begin{prop}
If $B$ satisfies the substitution property along algebraic formal or Puiseux arcs over $A$, then $B$ satisfies also
the weak substitution property on points over $A$.
\end{prop}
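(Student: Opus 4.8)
The plan is to deduce the weak substitution property on points from the substitution property along arcs by embedding the real closed field $\R$ into $\Rpuialg$ via constant series, and then pulling back. More precisely, given an $\R$-algebra morphism $\varphi : A \to \R$, I would compose it with the natural inclusion $\iota : \R \hookrightarrow \Rpuialg$ (respectively $\R \hookrightarrow \R[[t]]_{\rm alg}$) sending each real number to the corresponding constant power series. This produces a morphism $\iota \circ \varphi : A \to \Rpuialg$, to which the hypothesis applies: there is one and only one lifting $\Phi : B \to \Rpuialg$ with $\Phi_{|A} = \iota \circ \varphi$.

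The next step is to observe that the image of $\Phi$ actually lands in the constant series, i.e. in $\iota(\R)$, so that $\Phi$ descends to a genuine lifting $B \to \R$ of $\varphi$. The clean way to see this is to use a retraction: evaluation at $t = 0$ gives a ring morphism ${\rm ev}_0 : \Rpuialg \to \R$ (an element of $\Rpuialg$ is a germ extendable continuously at the origin, so this is well defined, and it is an $\R$-algebra morphism with ${\rm ev}_0 \circ \iota = \id_{\R}$). Then ${\rm ev}_0 \circ \Phi : B \to \R$ restricts on $A$ to ${\rm ev}_0 \circ \iota \circ \varphi = \varphi$, so it is a lifting of $\varphi$ to $B$, establishing existence.

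For uniqueness, suppose $\psi_1, \psi_2 : B \to \R$ are two liftings of $\varphi$. Post-composing with $\iota$ yields two morphisms $\iota \circ \psi_1, \iota \circ \psi_2 : B \to \Rpuialg$, each of which restricts on $A$ to $\iota \circ \varphi$; by the uniqueness clause in the substitution property along arcs, $\iota \circ \psi_1 = \iota \circ \psi_2$, and since $\iota$ is injective, $\psi_1 = \psi_2$. This handles both the formal and Puiseux cases identically, since $\R[[t]]_{\rm alg}$ also contains the constant series and admits evaluation at $t=0$.

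The only point that needs a little care — and the nearest thing to an obstacle — is checking that the constant inclusion $\iota$ and the evaluation ${\rm ev}_0$ are legitimate $\R$-algebra morphisms between the rings as defined (in particular that ${\rm ev}_0$ makes sense on $\Rpuialg$, which is exactly the statement, recalled in the excerpt from \cite{BCR}[Section 8.1], that elements of $\Rpuialg$ are germs extendable continuously at $0$), and that the composite $A \to \R \to \Rpuialg$ is indeed an arc in the sense of Definition \ref{SubstiutionArc}; this is immediate since composition of $\R$-algebra morphisms is an $\R$-algebra morphism. With these verifications the argument is complete.
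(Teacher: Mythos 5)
Your proposal is correct and is essentially the paper's own (much terser) argument: both treat the point $\varphi:A\to\R$ as the constant arc $\iota\circ\varphi$, obtain existence by composing the unique arc lifting with the evaluation at $t=0$ on $\Rpuialg$ (resp.\ on $\R[[t]]_{\rm alg}$), and obtain uniqueness from the uniqueness clause along arcs together with the injectivity of the constant inclusion $\iota$. Note only that your passing remark that the lifting $\Phi$ lands in the constant series is neither needed nor used, since the retraction ${\rm ev}_0$ already does the job.
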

\begin{proof}
It suffices to compose with the evaluation morphism $\R[[t]]_{\rm alg}\rightarrow\R$ 
(respectively with $\Rpuialg\rightarrow\R$) 
of an arc at its origin, by sending $t$ onto $0$.
\end{proof}

Let us also mention an example where the weak substitution on points does 
not imply the substitution along arcs.
\begin{ex}\label{vsexsubsarcssemialg}
Let $A=(\R[x])_{(x)}$ and $B=(\R[x,y]/(x^2-y^2))_{(x,y)}$. Then,
$B$ has the weak substitution property along points over $A$ but does not satisfy 
the substitution property over $A$ along arcs.
Indeed, the arc $x=t$ written on $A$ can be lifted to two different arcs $\gamma_1=(t,t)$
and $\gamma_2=(t,-t)$ on $B$.
Moreover, there is only one morphism $A\rightarrow \R$ : the one which sends $x$ to $0$.
This morphism admits one and only one lifting to $B$, namely the morphism
given by $x\mapsto 0$ and $y\mapsto 0$.
\end{ex}

\subsection{Substitution along arcs for continuous rational functions}
We show that the ring of continuous rational functions on 
a non-singular algebraic variety 
satisfies the substitution property along arcs. 
Moreover, we show that it is the biggest ring contained in the field of rational functions 
that one can expect to satisfy this property.   \par
In other words, the substitution property along arcs characterizes, 
on non-singular varieties, continuous rational functions. 
It enlightens the importance of substitution along arcs 
for hereditarily rational functions, as it has already be pointed out in the work 
of \cite{KKK} for instance.

\begin{thm}\label{SubstitutionRegulueArcs}
Let $A=\R[V]$ be the coordinate ring of a non-singular irreducible real algebraic variety $V\subset \R^n$. 
Then the ring $\Regz(V)$ of continuous rational functions 
on $V$ satisfies the substitution property along arcs. Moreover, if $B$ is a subring of the field $\R(V)$ of rational functions on $V$ which satisfies the substitution property along arcs, 
then $B\subset \Regz(V)$.
\end{thm}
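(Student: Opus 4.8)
The plan is to treat the two halves separately. For the existence and uniqueness of the lifting along arcs, I would mimic the argument of Theorem \ref{SubstitutionRegulue}. Existence is again the trivial part: given an algebraic formal (or Puiseux) arc $\gamma:\R[V]\to\R[[t]]_{\rm alg}$ (resp. $\Rpuialg$), a continuous rational function $f\in\Regz(V)$ defines a continuous semi-algebraic germ $f\circ\gamma$ at the origin, hence an element of $\Rpuialg$; this is the lifting. For uniqueness, I would use the constructible stratification of Theorem \ref{ConstructibleStratification}: pick the stratum $S_k$ (Zariski locally closed, say $S_k=\{r=0,s\neq 0\}$) such that the point of $\spr\R[V]$ associated to $\gamma$ lies in $\widetilde{S_k}$, i.e. $\gamma(r)=0$ and $\gamma(s)\neq 0$ in $\R[[t]]_{\rm alg}$ (resp. $\Rpuialg$). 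Writing $f=p_k/q_k$ regular on $S_k$, the zero set of $r$ is contained in the zero set of $s(q_kf-p_k)$, so \L ojasiewicz gives an integer $N$ and $g\in\Regz(V)$ with $(s(q_kf-p_k))^N=rg$. Applying any lift $\psi$ of $\gamma$, using $\psi(r)=\gamma(r)=0$ and $\psi(s)=\gamma(s)\neq 0$, $\psi(q_k)=\gamma(q_k)\neq 0$, one forces $\psi(f)=\gamma(p_k)/\gamma(q_k)$, which is determined by $\gamma$ alone. The only subtlety here is checking that the relevant elements of $\R[[t]]_{\rm alg}$ or $\Rpuialg$ are non-zero-divisors, which holds since these are domains; and, in the Puiseux case, one may alternatively pass to the real closed field $\Fpuialg$ and invoke Theorem \ref{SubstitutionRegulue} directly, as in the proof of the proposition on Nash and semi-algebraic functions.

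For the maximality statement, suppose $B\subset\R(V)$ satisfies the substitution property along arcs over $\R[V]$ and let $f=p/q\in B$ with $p,q$ coprime. I must show $f$ extends continuously to all of $V$, i.e. $f\in\Regz(V)$. Since $B$ a fortiori satisfies the weak substitution property on points, Proposition \ref{codimzerosdenom} applies: $\Z(q)\subset\Z(p)$ and $\Z(q)$ has codimension at least $2$ in $V$. It remains to prove that $f$, a priori defined only on $V\setminus\Z(q)$, is bounded near, and in fact extends continuously across, $\Z(q)$. Here is where the arcs enter: fix $x_0\in\Z(q)$. For any algebraic Puiseux arc $\gamma$ centered at $x_0$ and not entirely contained in $\Z(q)$ — equivalently, any $\R$-algebra morphism $\R[V]\to\Rpuialg$ with $\gamma(0)=x_0$ and $\gamma(q)\neq 0$ — the lifting hypothesis provides $\psi(f)\in\Rpuialg$, which forces $\gamma(p)/\gamma(q)\in\Rpuialg$, i.e. the semi-algebraic germ $f\circ\gamma$ has a finite limit at $0$ equal to $\psi(f)(0)$. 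By the curve selection lemma and the semi-algebraic version of continuity-along-arcs (every point of the closure is reached by a semi-algebraic arc, and a semi-algebraic function with a limit along every arc at a point of its closure extends continuously there — see \cite{BCR}), $f$ is locally bounded near $x_0$ and extends continuously. One must also rule out arcs $\gamma$ with $\gamma(q)=0$ but $\gamma\not\subset\Z(q)$ generically: but $\gamma(q)=0$ in the domain $\Rpuialg$ forces $q$ to vanish identically along $\gamma$, contradicting that $\Z(q)$ has codimension $\geq 2$ while $\gamma$ is an arc — more carefully, one chooses $\gamma$ transverse enough (using that $V\setminus\Z(q)$ is dense, $V$ being non-singular hence central) so that $\gamma(q)\neq 0$. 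Finally, continuity at points outside $\Z(q)$ is automatic since $f$ is regular there, so $f\in\Regz(V)$.

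The main obstacle I anticipate is the passage from "$f\circ\gamma$ has a finite limit along every algebraic Puiseux arc at $x_0$" to "$f$ extends continuously at $x_0$". The limit-along-arcs gives local boundedness via curve selection, but continuity of the extension requires that the limit value $\psi(f)(0)$ be independent of $\gamma$ and vary continuously with $x_0$; this should follow from the uniqueness half of the substitution property (the value $\psi(f)$ is determined by $\gamma$, and two arcs $\gamma_1,\gamma_2$ with the same center define, after a common refinement of the fractional exponents, arcs into a common $\Rpuialg$, so the evaluation-at-$0$ morphism pins down a single limit), combined with the standard fact that a locally bounded semi-algebraic function whose restriction to every arc extends continuously is itself continuous. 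I would cite \cite{BCR} for the semi-algebraic curve-selection and continuity criterion rather than reprove it. A secondary point to handle with care is the distinction between algebraic \emph{formal} arcs and algebraic \emph{Puiseux} arcs: the maximality argument genuinely needs Puiseux arcs (to detect fractional-order vanishing of $q$ along curves), which is consistent with the theorem statement referring to "substitution property along arcs" in the Puiseux sense.
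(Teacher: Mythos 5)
Your proposal is correct and follows essentially the same route as the paper: existence of the lifting by composing with the arc, uniqueness either by re-running the stratification/\L ojasiewicz argument of Theorem \ref{SubstitutionRegulue} or (as the paper actually does) by viewing the arc as a point of the real closed field $\Fpuialg$ and invoking the point-substitution property, and maximality by evaluating $f=p/q\in B$ along Puiseux arcs through $x_0$ avoiding $\Z(q)$, using the lifting to force $\gamma(p)/\gamma(q)\in\Rpuialg$, and concluding continuity by curve selection. Two small corrections: the independence of the limit from the chosen arc is most cleanly obtained by composing the lifting with the evaluation at $t=0$ and using the uniqueness of the point lifting at $x_0$ (this is what the paper means by ``the substitution property at the origin of the arc''), rather than via a ``common refinement of exponents''; and your closing claim that the maximality statement genuinely needs Puiseux arcs is not accurate, since the paper also deduces $B\subset\Regz(V)$ from the formal-arc hypothesis by reparametrizing a Puiseux arc as $\delta(t)=\gamma(t^m)\in(\R[[t]]_{\rm alg})^n$ and composing with $t\mapsto\sqrt[m]{t}$. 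Also note that the appeal to Proposition \ref{codimzerosdenom} (codimension $\geq 2$) is not needed: it suffices that $\Z(q)$ is a proper Zariski closed subset of $V$ so that arcs avoiding it through any $x_0$ exist and detect continuity.
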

\begin{proof}
First of all, the natural lifting given by the evaluation of a continuous semi-algebraic function along a semi-algebraic arc gives the existence property. For the unicity, remark that $\Regz(\R^n)$ satisfies the substitution property on points 
(theorem \ref{SubstitutionRegulueSing}).
Then, any morphism $\gamma:A\rightarrow R$, where $R=\Rpuialg$ or $\R[[t]]_{\rm alg}$, 
gives rise to a morphism  $A\rightarrow \Fpuialg$ which can uniquely be lifted to $\Regz(\R^n)$. 
This shows the unicity for our desired substitution property, namely there cannot exist another lifting than 
the natural one.\vskip 0,5cm

Let us consider now a ring $B$ satisfying the substitution property along arcs and let us show 
the inclusion $B\subset \Regz(V)$. \par
Let us first consider the case $R=\Rpuialg$.  
One may use the classical fact (see for instance \cite[Chap 6, Lemma 4.2]{VDD}) 
that a semi-algebraic function $f$ is 
continuous on $S$
if and only if $f\circ\gamma$ is continuous
for all algebraic Puiseux arc $\gamma$ supported in $S$.

In fact, it is possible to adapt a little bit this result for our
purpose :
\begin{lem}
Let $f:S\rightarrow \R$ be a semi-algebraic function where $S$ is a 
semi-algebraic subset which has dimension $n$ at the point $x_0\in S$. 
Assume that $x_0\in T$ where $T$ is a semi-algebraic subset of dimension $<n$.
Then, $f$ is continuous at $x_0$ if and only if $f\circ\gamma$ is continuous
at $0$ for any algebraic Puiseux arc $\gamma$ supported in $S\setminus T\cup\{x_0\}$ and passing through $x_0$.
\end{lem}
\begin{proof}
If $f$ is not continuous at $x_0$, then there is $\epsilon>0$
such that the set $$\{\Vert x-x_0\Vert \mid x\in S,|f(x)-f(x_0)|\geq \epsilon \}$$
contains arbitrarily positive elements.
Up to re-sizing this $\epsilon$, using the density of 
$S\setminus T$ in $T$, one gets that
$$\{\Vert x-x_0\Vert \mid x\in S\setminus T\cup\{x_0\},|f(x)-f(x_0)|\geq \epsilon \}$$
contains also arbitrarily positive elements.
Since this set is semi-algebraic, it contains an interval $I$ with endpoints $0$.
Now, by the curve selection lemma, there is an algebraic Puiseux arc 
$\gamma:I\rightarrow S\setminus T\cup\{x_0\}$ such that 
$\Vert\gamma(t)-x_0\Vert<\epsilon$ and $|f(\gamma(t))-f(x_0)|\geq \epsilon$.
This concludes the proof.
\end{proof}

Let $f=p/q$ in $B$ and let $\gamma$ be an algebraic Puiseux arc well-defined at the origin, which 
is not contained into the polar locus of $f$, namely such that $q\circ\gamma\not=0$.
Then, $f\circ \gamma=p\circ\gamma/q\circ\gamma$ is well defined in $\Fpuialg$, 
and necessarily 
$p\circ\gamma/q\circ\gamma$ has a non-negative valuation. Hence, $f\circ\gamma$ is continuous.\par
Moreover, using the substitution property at the origin of the arc, one deduces that the 
limit is $f(\gamma(0))$.
In conclusion, $f\in\Regz(V)$ and hence $B\subset \Regz(V)$.
\par
Let us consider now the case $R=\R[[t]]_{\rm alg}$. Let $f\in B$ and as previously
$\gamma$ be an algebraic Puiseux arc, well-defined at the origin, which is not contained 
in the polar locus of $f$. There exist an integer 
$m$ such that $\delta(t)=\gamma(t^m)\in(\R[[t]]_{\rm alg})^n$. Then, since
$B$ satisfies the substitution property along arcs for $R=\R[[t]]_{\rm alg}$, the arc $\delta$ uniquely lifts to $B$
and $f\circ\delta$ is continuous and admits $f(\delta(0))$ as a limit. 
Composing with the continuous semi-algebraic function $t\mapsto\sqrt[m]{t}$ on one gets that 
 $f\circ\gamma$ is continuous and admits $f(\gamma(0))$ as a limit too.
 \end{proof}

One natural question is what happens if we replace algebraic Puiseux arcs with formal Puiseux series. It happens that the substitution property remains true for continuous rational functions.

\begin{thm}\label{SubstitutionRegulueArcsFormels}
Let $A=\R[V]$ be the coordinate ring of a non-singular irreducible real algebraic variety $V\subset \R^n$. 
Then the ring $\Regz(V)$ of continuous rational functions 
on $V$ satisfies the substitution property along arcs in $\Rpuifor$. 

Moreover, if $B$ is a subring of the field $\R(V)$ of rational functions on $V$ 
which satisfies the substitution property along arcs in $\Rpuifor$, 
then $B\subset \Regz(V)$.
\end{thm}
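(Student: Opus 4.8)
The plan is to reduce as much as possible to the two substitution statements already at our disposal: the one on points for a general real closed field (Theorem~\ref{SubstitutionRegulue}) and the one along algebraic Puiseux arcs (Theorem~\ref{SubstitutionRegulueArcs}). Recall that $\Rpuifor=\R[[t^{1/\N}]]$ is the convex hull of $\R$ in the real closed field $\Fpui=\R((t^{1/\N}))$, that is, the valuation ring for the $t$-adic valuation, which I denote $v$. Given a morphism $\gamma\colon\R[V]\to\Rpuifor$, composing it with the inclusion $\Rpuifor\hookrightarrow\Fpui$ produces a point $\alpha$ of $\spr\R[V]$, and Theorem~\ref{SubstitutionRegulue} provides a \emph{unique} lift $\widehat\gamma\colon\Regz(V)\to\Fpui$. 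Uniqueness of a lift with values in $\Rpuifor$ is then immediate (compose it again with $\Rpuifor\hookrightarrow\Fpui$), and the whole point is to prove that $\widehat\gamma$ itself takes its values in $\Rpuifor$.

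To this end, fix $f\in\Regz(V)$. As in the proof of Theorem~\ref{SubstitutionRegulue}, Theorem~\ref{ConstructibleStratification} allows me to write the stratum of $\alpha$ as $S_0=\{r=0,\ s\neq 0\}$ with $f|_{S_0}=p_0/q_0$ for polynomials $p_0,q_0\in\R[V]$, $q_0$ not vanishing on $S_0$, so that the \L ojasiewicz identity gives $\widehat\gamma(f)=\gamma(p_0)/\gamma(q_0)$ with $\gamma(q_0)\neq 0$ (since $q_0(\alpha)\neq 0$). Now $f$, being a continuous semi-algebraic function on the closed set $V$, is bounded by a polynomial: $r\mapsto\sup\{|f(x)|:x\in V,\ \|x\|\le r\}$ is semi-algebraic, hence eventually dominated by a power, so $f^2\le Q$ on $V$ for some $Q\in\R[x_1,\dots,x_n]$. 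Restricting to $S_0$ and clearing denominators, the polynomial $h:=Qq_0^2-p_0^2\in\R[V]$ is nonnegative on $S_0$; since $\alpha\in\widetilde{S_0}$ and $\widetilde{S_0}\cap\widetilde{\{h<0\}}=\widetilde{S_0\cap\{h<0\}}=\emptyset$, we get $h(\alpha)\ge 0$, i.e. $\gamma(p_0)^2\le\gamma(Q)\gamma(q_0)^2$ in $\Fpui$. As $\gamma(Q)\in\Rpuifor$ has valuation $\ge 0$, convexity of $\Rpuifor$ forces $2v(\gamma(p_0))\ge v(\gamma(Q))+2v(\gamma(q_0))\ge 2v(\gamma(q_0))$, hence $v(\widehat\gamma(f))\ge 0$ and $\widehat\gamma(f)\in\Rpuifor$. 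This proves that $\Regz(V)$ has the substitution property along arcs in $\Rpuifor$, and this estimate is the one genuinely new point: a formal Puiseux arc has no geometric meaning, so continuity along it cannot be read off directly and must be recovered through the polynomial growth bound for $f$, transported to the arc via the real spectrum of the finitely generated ring $\R[V]$.

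For the maximality statement, let $B\subseteq\R(V)$ satisfy the substitution property along arcs in $\Rpuifor$; composing with $t\mapsto 0$ shows $B$ has the weak substitution property on points. Let $f=p/q\in B$ and let $\gamma$ be an algebraic Puiseux arc on $V$ not contained in the polar locus of $f$, i.e. $q\circ\gamma\neq 0$. Viewing $\gamma$ as a morphism $\R[V]\to\Rpuialg\subseteq\Rpuifor$, the hypothesis gives a unique lift $\widehat\gamma\colon B\to\Rpuifor$, and $qf=p$ yields $\widehat\gamma(f)=\gamma(p)/\gamma(q)\in\Fpuialg\cap\Rpuifor=\Rpuialg$; thus $f\circ\gamma$ is a bounded germ, continuous at $0$, with value there equal to $f(\gamma(0))$ by weak substitution on points. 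The curve selection lemma used in the proof of Theorem~\ref{SubstitutionRegulueArcs} then forces $f$ to extend continuously to $V$, so $f\in\Regz(V)$ and $B\subseteq\Regz(V)$, as desired.
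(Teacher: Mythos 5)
Your proof is correct, but the existence half takes a genuinely different route from the paper. Where the paper, after expressing $\widehat\gamma(f)=\gamma(p)/\gamma(q)$ on the stratum containing $\alpha$, argues by contradiction and invokes Artin approximation to replace the formal Puiseux arc $\gamma$ by an algebraic one $\delta$ preserving $\delta(r)=0$ and the relevant $t$-orders, so as to contradict Theorem \ref{SubstitutionRegulueArcs}, you instead stay with $\gamma$ itself: you use the polynomial growth bound $f^2\le Q$ on the closed set $V$ (valid for any continuous semi-algebraic function), transfer the resulting inequality $p_0^2\le Q q_0^2$ on $S_0$ to the real spectrum point $\alpha\in\widetilde{S_0}$, and conclude $v(\widehat\gamma(f))\ge 0$ from the convexity of the valuation ring $\Rpuifor$ in $\Fpui$. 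This buys a more self-contained argument: it needs neither Artin approximation nor even Theorem \ref{SubstitutionRegulueArcs}, only Theorems \ref{SubstitutionRegulue} and \ref{ConstructibleStratification}, \L ojasiewicz, and order--valuation compatibility; and like the paper's proof it would extend verbatim to regulous functions on singular varieties, since the only geometric input is the stratification with regular restrictions plus a global growth bound. What the paper's route buys in exchange is the explicit bridge between formal and algebraic Puiseux arcs, keeping the algebraic-arc theorem as the single workhorse. For the uniqueness and the maximality statements your argument coincides with the paper's (the maximality part, which the paper leaves implicit in this proof, is exactly the argument of Theorem \ref{SubstitutionRegulueArcs} plus the observation $\Fpuialg\cap\Rpuifor=\Rpuialg$), so no issues there.
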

\begin{proof}
Let us consider $\gamma:\R[V]\rightarrow \Rpuifor$. 
One may see $\gamma$ as a morphism into $\Fpui$ which is a real closed field. Then, since 
$\Regz(V)$ satisfies the substitution property over $\R[V]$ (theorem \ref{SubstitutionRegulue}), 
one gets the unicity for a lifting of $\gamma$ to $\Regz(V)$.

In order to prove the existence of such a lifting, we show that the unique lifting of $\gamma$ with values in $\Fpui$ has, in fact, values in 
$\Rpuifor$.
By the contrary, let us assume that there is a continuous rational function $f$
such that $\gamma(f)\notin \Rpuifor$. 
By theorem \ref{ConstructibleStratification}, there is a Zariski
locally closed stratification
$V=S_1\cup\ldots\cup S_t$ such that $f$ is regular with restriction to each stratum.
One derives a constructible stratification of the real spectrum of 
$\R[V]$, namely $\widetilde{V}=\widetilde{S_1}\cup\ldots\cup \widetilde{S_t}$.
Moreover, the morphism $\gamma$ defines a 
point of $\widetilde{V}$ that we assume to lye in $\widetilde{S_1}$.
Since $f$ is regular by stratum, there is two polynomials $p,q$ in 
$\R[V]$ such that $f=p/q$ is regular with restriction to $S_1$.
In particular $\gamma(q)\not=0$ and we get $\gamma(f)=\frac{\gamma(p)}{\gamma(q)}$.

The condition $\gamma(f)\notin \Rpuifor$ says that 
$\ord_t(\gamma(p))<\ord_t (\gamma(q))$.
By the Artin approximation theorem (\cite{A}), there exists
$\delta:\R[V]\rightarrow \Rpuialg$ that approximate $\gamma$ as close as desired for the 
$(\rm t)$-adic topology.
More precisely, if $S_1=\{r=0,s\not=0\}$, where $r,s$ are two polynomials, 
one may choose $\delta$ satisfying 
$$\begin{array}{l}
\delta(r)=0,\\
\ord_t(\gamma(p))=\ord_t(\delta(p)),\\
\ord_t(\gamma(q))=\ord_t(\delta(q)),\\
\ord_t(\gamma(s))=\ord_t(\delta(s)).\\
\end{array}$$

Then, one deduces that $\ord_t(\delta(p))<\ord_t (\delta(q))$ and $\delta\in\widetilde{S_1}$.
Thus, $\delta(f)=\frac{\delta(p)}{\delta(q)}$. By theorem \ref{SubstitutionRegulueArcs}, 
one knows that
$\delta(f)\in \Rpuialg$ and hence $\ord_t(\gamma(p))\geq\ord_t (\gamma(q))$, a contradiction.
This concludes the proof. 
\end{proof}

\begin{rem} The proof of Theorem \ref{SubstitutionRegulueArcsFormels}
  heavily relies on the existence of a stratification, associated with
  a given continuous rational function, such that the function is
  regular on the strata. This result can also be generalized (with
  exactly the same proof) to a singular real algebraic variety if we
  replace the ring of continuous rational functions by the ring of
  regulous functions as considered in Theorem
  \ref{SubstitutionRegulueSing}. Moreover, all the previous results
  that concern non-singular real algebraic varieties can be extended
  to central real algebraic varieties with isolated singularities
  since in this case rational continuous functions are still regulous
  \cite[Theorem 2.25]{Mo}.
However, the question to know whether we get the same notion if we 
replace algebraic Puiseux arcs with formal ones in 
the definition of the substitution along arcs for general rings (definition \ref{SubstiutionArc})
seems open.
\end{rem}

Another natural question which arises is what can be said about
substitution along arcs for continuous rational functions defined 
on singular varieties. Note already that this times considering Puiseux arcs or formal arcs makes a crucial difference.

\begin{ex}
Consider the (singular but central) variety $V\subset \R^3$ defined by the equation $y^3=z^2x^3$. 
The arc $\gamma$ given by $x=y=0$ and $z=t$ is contained in $V$. Let us now consider the rational function $f=y/x$ on $V$.
This function is continuous on $V$ because of the relation $f^3=z^2$. Note now that the power series arc $\gamma\in(\R[[t]]_{\rm alg})^3$ 
lifts to 
$f\circ\gamma=t^{2/3}$ which belongs to $\Rpuialg$ but not to $\R[[t]]_{\rm alg}$. 
\end{ex}

\section{Substitution property for singular varieties}
We give some examples and partial results under some assumptions and perform also a counterexample showing that there is no hope to get 
the substitution property along arcs for continuous rational functions defined on a general singular variety.  

\subsection{First facts}
From now on, let $V\subset\RR^n$ be a singular irreducible  central algebraic variety.
We first state an elementary result that will imply the weak substitution property.

\begin{prop}
Let $A=\R[V]$ the coordinate ring of $V\subset\RR^n$ and $B$ an $A$-algebra 
which is a subring of the ring of semi-algebraic 
continuous functions on $V$. Assume that $B$ satisfies 
the \L ojasiewicz property over $A$. Namely 
for any $f$ and $g$ in $B$ such that $\Z(f)\subset \Z(g)$, 
there is $h\in B$ and an integer $n$ such that 
$g^n=fh$.\par 
Then, $B$ satisfies the weak substitution property over points.
\end{prop}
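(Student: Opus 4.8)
The plan is to prove that if $B\subseteq\mathcal{C}^0_{sa}(V)$ is an $\R[V]$-algebra satisfying the \L ojasiewicz property over $A=\R[V]$, then any $\R$-algebra morphism $\phi\colon A\to\R$ admits one and only one lifting to $B$. Since $B$ consists of semi-algebraic continuous functions on $V$, such a morphism $\phi$ is the evaluation at a point $x_0\in V$, and the natural candidate lifting is $f\mapsto f(x_0)$; this gives existence, so the whole content is uniqueness.

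For uniqueness, suppose $\psi\colon B\to\R$ is any $\R$-algebra morphism restricting to $\phi$ on $A$, i.e. agreeing with evaluation at $x_0$ on polynomial functions. Let $f\in B$. Replacing $f$ by $f-f(x_0)\cdot 1_B$ (legitimate since $B$ is an $\R$-algebra and the constant $f(x_0)$ lies in the image of $A$, hence $\psi$ sends it to $f(x_0)$), we reduce to the case $f(x_0)=0$, and it suffices to show $\psi(f)=0$. Now I would invoke the \L ojasiewicz property of $B$ applied to the pair $(g,f)$ where $g=\sum_i (x_i-a_i)^2\in A$ (with $x_0=(a_1,\dots,a_n)$): since $f$ is continuous and $f(x_0)=0$, we have $\Z(g)=\{x_0\}\subseteq\Z(f)$ inside $V$, so there exist $h\in B$ and an integer $N$ with $f^N=g\,h$. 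Applying the ring morphism $\psi$ and using $\psi(g)=\phi(g)=0$ gives $\psi(f)^N=\psi(g)\psi(h)=0$, whence $\psi(f)=0$ as desired.

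I expect the main (minor) obstacle to be the verification that the hypotheses of the \L ojasiewicz property are genuinely met — specifically that $\Z(g)\subseteq\Z(f)$ holds as an equality of zero sets of elements of $B$ and not merely set-theoretically on $V$, and that $\{x_0\}$ is exactly the real zero set of $g=\sum(x_i-a_i)^2$ on $V$. This is immediate here: a sum of squares vanishes precisely where each summand does, so $\Z(g)=\{x_0\}$, and $f(x_0)=0$ by our normalization gives the inclusion $\Z(g)\subseteq\Z(f)$. One should also note that the argument uses only $R=\R$, which is why we obtain the \emph{weak} substitution property; for a general real closed field the zero-set reasoning would require working in the real spectrum and the \L ojasiewicz property as stated over $A$ would not directly apply.
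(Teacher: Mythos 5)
Your proof is correct and follows essentially the same route as the paper: existence via the canonical evaluation lifting, then uniqueness by normalizing $f$ so that $f(x_0)=0$ and applying the \L ojasiewicz property to $f$ and the sum of squares $\sum_i(x_i-a_i)^2$, whose image under the morphism vanishes. The only cosmetic difference is that the paper translates $x_0$ to the origin for simplicity, while you keep a general point $x_0=(a_1,\dots,a_n)$.
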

\begin{proof}
Let $\phi:A\rightarrow \RR$ be a morphism.  
Since $B$ is a subring of the ring of semi-algebraic continuous functions,
$\phi$ admits a canonical lifting to $B$. Hence, one has only to show the unicity.
The argument (already met in the 
proof of theorem \ref{SubstitutionRegulue}) is the following.

One may assume, for simplicity, 
that $\phi(x_1)=\ldots=\phi(x_n)=0$. 
Let us assume then, for simplicity, that the origin $o$ of $\RR^n$ is 
in $V$. Let $f\in B$. Up to considering $f-f(o)$, one may assume that $f(o)=0$.

Then, by \L ojasiewicz property applied to $x_1^2+\ldots+x_n^2$ and $f$, 
there exists an integer $N$ and a semi-algebraic continuous function $g\in B$ 
such that $f^N=(x_1^2+\ldots+x_n^2)g$. 
This algebraic identity implies that $\phi(f)=0$ 
which concludes the proof.
\end{proof}

As a consequence, since the ring $\Regz(V)$ has the \L ojasiewicz property, one gets :
\begin{prop}
The ring $\Regz(V)$ of continuous rational functions on $V$ satisfies the weak substitution property on points.
\end{prop}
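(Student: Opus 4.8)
The plan is to deduce the statement directly from the previous proposition by verifying that $\Regz(V)$ has the \L ojasiewicz property over $\R[V]$, since then the immediately preceding result (the weak substitution property for $A$-subalgebras of the ring of continuous semi-algebraic functions that satisfy \L ojasiewicz) applies verbatim. First I would recall that $\Regz(V)$ is indeed a subring of the ring of continuous semi-algebraic functions on $V$: a continuous rational function is semi-algebraic because its graph is contained in the Zariski closure of the graph of the corresponding rational map, which is an algebraic set, and continuity is built into the definition. So the only thing to check is the \L ojasiewicz property.

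Next I would establish the \L ojasiewicz property for $\Regz(V)$: given $f,g\in\Regz(V)$ with $\Z(f)\subset\Z(g)$, I must produce $h\in\Regz(V)$ and an integer $N$ with $g^N=fh$. The standard \L ojasiewicz inequality \cite[Proposition 2.6.4]{BCR} applied to the continuous semi-algebraic functions $|f|$ and $|g|$ on a bounded neighbourhood, together with the fact that $V$ is affine (so one can pass to a global version using properness-type arguments as in \cite[2.6.6]{BCR}), gives an integer $N$ and a constant $C$ with $|g|^N\le C|f|$ on $V$; hence the quotient $h:=g^N/f$, which is a priori only defined on $V\setminus\Z(f)$, is bounded near $\Z(f)$. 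The key point is that $h$ is a rational function on $V$ (being the ratio of the rational functions $g^N$ and $f$) and it is continuous on $V\setminus\Z(f)$ and bounded there; since $f$ and $g$ are continuous, $h$ extends continuously across $\Z(f)$ by the bound, so $h\in\Regz(V)$ and $g^N=fh$ holds as an identity of continuous functions, hence as an identity in $\R(V)$.

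With the \L ojasiewicz property in hand, the conclusion is immediate: apply the previous proposition with $B=\Regz(V)$ to obtain the weak substitution property on points for $\Regz(V)$ over $\R[V]$. (One may also invoke Theorem \ref{SubstitutionRegulueSing} when $V$ is smooth, but the point here is that no smoothness is needed for the \emph{weak} statement.)

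The main obstacle is the careful verification that the \L ojasiewicz quotient $h=g^N/f$ genuinely lies in $\Regz(V)$, i.e.\ that boundedness of a continuous rational function near the locus $\Z(f)$ forces a continuous rational extension across it — this uses that $f,g$ are already known to be continuous on all of $V$, so the only new input is controlling the behaviour of the ratio on $\Z(f)$, which the \L ojasiewicz bound supplies; passing from a local \L ojasiewicz inequality to a global one on the affine variety $V$ (handling the behaviour at infinity) is the other technical point, resolved by the affine \L ojasiewicz inequality as in \cite{BCR}.
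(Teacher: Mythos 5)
Your overall route is exactly the paper's: the proposition is obtained by applying the preceding proposition (weak substitution for subrings of continuous semi-algebraic functions enjoying the \L ojasiewicz property) to $B=\Regz(V)$, the paper simply asserting that $\Regz(V)$ has the \L ojasiewicz property. The difference is that you try to verify this property, and your verification contains a genuine gap: from the inequality $|g|^N\le C|f|$ you set $h:=g^N/f$ and claim that boundedness of $h$ near $\Z(f)$ forces a continuous extension across $\Z(f)$. Boundedness does not imply extendability. Take $f=x^2+y^2$ and $g=x$ in $\Regz(\R^2)$: then $\Z(f)=\{0\}\subset\Z(g)$ and $|g|^2\le |f|$, but $g^2/f=x^2/(x^2+y^2)$ is bounded and admits no limit at the origin, so with the exponent furnished by the inequality your candidate $h$ need not lie in $\Regz(V)$ (nor even extend continuously).

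The repair is easy and standard: raise the exponent. With $|g|^N\le C|f|$ one gets $|g^{N+1}/f|\le C|g|$ on $V\setminus\Z(f)$, and since $\Z(f)\subset\Z(g)$ and $g$ is continuous, this quotient tends to $0$ at every point of $\Z(f)$; hence $h:=g^{N+1}/f$ extends continuously (by $0$ on $\Z(f)$), is rational (it is regular on a non-empty Zariski open subset because $f$ and $g$ are), and satisfies $g^{N+1}=fh$ in $\Regz(V)$. Alternatively, invoke directly the global \L ojasiewicz statement of \cite[Th\'eor\`eme 2.6.6]{BCR}, which the paper itself uses elsewhere: it produces an integer $N$ and a \emph{continuous semi-algebraic} $h$ with $g^N=fh$ on $V$, after which you only have to observe that $h$ agrees with the rational function $g^N/f$ off $\Z(f)$, hence $h\in\Regz(V)$ (the degenerate case $f=0$ in $\R(V)$ being trivial since $V$ is central). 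With either fix your argument becomes correct and coincides in substance with the paper's.
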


One can get also the substitution along arcs but under some additional hypothesis.
\begin{prop}
Let $\pi: \widetilde{V} \to V$ be a birational regular morphism with $\widetilde{V}$ non-singular, such that $\pi$ is an homeomorphism for the Euclidean topology. 
Then $\Regz(V)$ is isomorphic to $\Regz(\widetilde{V})$ and satisfies the substitution property along arcs.
\end{prop}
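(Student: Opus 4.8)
The plan is to reduce everything to the non-singular case via the map $\pi$. First I would establish the ring isomorphism $\Regz(V)\simeq\Regz(\widetilde V)$. Since $\pi$ is a birational regular morphism, composition with $\pi$ gives an injective $\R$-algebra homomorphism $\pi^*:\R(V)\hookrightarrow\R(\widetilde V)$ which is in fact an isomorphism of fraction fields (birationality). I claim $\pi^*$ restricts to an isomorphism $\Regz(V)\to\Regz(\widetilde V)$. If $f\in\Regz(V)$, then $f$ is continuous on $V$ and regular on a dense Zariski open $W\subset V$; then $f\circ\pi$ is continuous on $\widetilde V$ (continuity of $\pi$) and regular on $\pi^{-1}(W)$, hence $f\circ\pi\in\Regz(\widetilde V)$. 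Conversely, if $g\in\Regz(\widetilde V)$, write $g=(f\circ\pi)$ for a unique $f\in\R(V)$ by birationality; then $f=g\circ\pi^{-1}$ is continuous on $V$ because $\pi^{-1}$ is continuous (here is exactly where the hypothesis that $\pi$ is a homeomorphism for the Euclidean topology is used), and $f$ is regular on a dense Zariski open subset of $V$ since $g$ is regular on a dense Zariski open of $\widetilde V$ and $\pi$ is birational. Hence $\pi^*:\Regz(V)\to\Regz(\widetilde V)$ is an isomorphism.

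The second step is to transport the substitution property along arcs across this isomorphism. Note $\pi$ also induces an isomorphism $\pi^*:\R[V]\to\R[\widetilde V]$? This is not quite true in general — $\pi$ need only be birational, so $\R[\widetilde V]$ may be strictly larger than $\pi^*(\R[V])$. So instead I would argue at the level of the substitution diagrams directly. Fix a real closed valued field setting: let $R=\Rpuialg$ (the formal case $\Rpuifor$ is handled identically, and $\R[[t]]_{\rm alg}$ too, mutatis mutandis). Given a morphism $\gamma:\R[V]\to\Rpuialg$, i.e.\ an algebraic Puiseux arc traced on $V$, I must produce a unique lifting to $\Regz(V)$. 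For existence, the natural lifting via composition of semi-algebraic functions works as usual, since $\Regz(V)$ consists of continuous semi-algebraic functions on $V$ and an algebraic Puiseux arc on $V$ may be composed with any continuous semi-algebraic function on $V$. For uniqueness I lift the arc through the homeomorphism: by the valuative criterion / curve lifting for the proper (indeed, here homeomorphic) map $\pi$, the arc $\gamma$ lifts uniquely to an algebraic Puiseux arc $\widetilde\gamma:\R[\widetilde V]\to\Rpuialg$ on $\widetilde V$ with $\pi\circ\widetilde\gamma=\gamma$; concretely, $\widetilde\gamma$ is obtained from $\gamma(x_1),\dots,\gamma(x_n)$ together with the extra coordinates of $\widetilde V\subset\R^N$, which are algebraic Puiseux series in $t$ determined by the birational relation and lying in $\Rpuialg$ because $\pi^{-1}$ is semi-algebraic and continuous. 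Then for $f\in\Regz(V)$ any lifting $\Phi$ of $\gamma$ satisfies $\Phi(f)=\widetilde\Phi(\pi^*f)$ for the corresponding lifting $\widetilde\Phi$ of $\widetilde\gamma$ to $\Regz(\widetilde V)=\pi^*\Regz(V)$; since $\Regz(\widetilde V)$ satisfies the substitution property along arcs by Theorem \ref{SubstitutionRegulueArcs} ($\widetilde V$ non-singular), $\widetilde\Phi$ is unique, hence $\Phi$ is unique. This gives the substitution property along (algebraic Puiseux) arcs for $\Regz(V)$.

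The main obstacle I anticipate is the uniqueness argument, specifically the claim that an arc on $V$ lifts to a \emph{unique} arc on $\widetilde V$ landing in the \emph{algebraic} Puiseux ring. Surjectivity of $\pi$ gives a lift of the center, and the homeomorphism property forces the lift of the whole arc to be unique as a set-theoretic semi-algebraic germ; the algebraicity follows because $\pi$ is a regular (hence semi-algebraic) isomorphism onto its image away from the indeterminacy locus, so $\pi^{-1}\circ\gamma$ is a semi-algebraic arc germ and therefore given by algebraic Puiseux series. One must be slightly careful that $\widetilde\gamma$ is defined at $t=0$, i.e.\ has nonnegative valuation in each coordinate: this is exactly the statement that $\pi^{-1}$ extends continuously, which is part of the hypothesis. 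A secondary point is that $\pi^*$ need not carry $\R[V]$ onto $\R[\widetilde V]$, so one cannot directly invoke Theorem \ref{SubstitutionRegulueArcs} for the algebra $\R[V]$; this is circumvented precisely by routing the argument through $\widetilde\gamma:\R[\widetilde V]\to\Rpuialg$ as above, using that $\gamma$ factors through $\R[\widetilde V]$ via $\pi^*$. Once these points are settled, the formal-arc version follows verbatim using Theorem \ref{SubstitutionRegulueArcsFormels} in place of Theorem \ref{SubstitutionRegulueArcs}.
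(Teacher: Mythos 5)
Your proposal is correct and follows essentially the same route as the paper: first the isomorphism $\Regz(V)\simeq\Regz(\widetilde V)$ via composition with $\pi$ and with the continuous rational map $\pi^{-1}$, then reduction of the substitution property to the non-singular case by observing that the arc $\gamma$ on $V$ lifts uniquely to an arc on $\widetilde V$ (the paper argues this by composing with the evaluations $t\mapsto u$ and using bijectivity of $\pi$, you by continuity and semi-algebraicity of $\pi^{-1}$ — the same idea) and invoking Theorem \ref{SubstitutionRegulueArcs}. Your added care about $\pi^*$ not mapping $\R[V]$ onto $\R[\widetilde V]$ and about the lifted arc landing in $\Rpuialg$ only makes explicit points the paper leaves implicit.
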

\begin{proof}
 The canonical morphism $\Regz(V)\to \Regz(\widetilde{V})$,
 given by the composition by $\pi$, is an isomorphism whose 
 inverse isomorphism is given by the composition by the continuous
 rational map $\pi^{-1}$ (the composition is still rational continuous because
 $\pi$ is birational).
  
 We know, by theorem \ref{SubstitutionRegulueArcs}, that 
 $\Regz(\widetilde{V})$ satisfies the substitution property along arcs, since 
 $\widetilde{V}$ is non-singular.
 Hence, the substitution property along arcs holds also for $\Regz(V)$. Indeed,
 one has to show that any arc $\gamma:\R[V]\rightarrow R$, where $R=\Rpuialg$ or $\R[[t]]_{\rm alg}$,
 admits a unique lifting to $\R[\widetilde{V}]$. One may compose $\gamma$ with the evaluation morphism 
 $e_u:R\rightarrow \RR$ sending $t$ onto $u\in [0,\epsilon[$.
 Since $\widetilde{V}\rightarrow V$ is a bijection, the morphism
 $e_u\circ \gamma$ admits a unique lifting to $\R[\widetilde{V}]$ for any $u\in[0,\epsilon[$.
 This shows that $\gamma$ has a unique lifting to $\R[\widetilde{V}]$.
\end{proof}

One may illustrate this result with the following example.
\begin{ex}
Let us consider the surface with equation $x^3=(1+z^2)y^3$, for which continuous rational functions and regulous functions differ \cite{KN}. The blowing-up $\widetilde{V}\rightarrow V$ along the $z$-axis is an homeomorphism and hence the ring $\Regz(V)$ does satisfy the substitution property along arcs. 
Note moreover that for the continuous rational function $x/y$, the denominator has a 
zero set of codimension only one (to be compared with the smooth 
case given by Proposition \ref{codimzerosdenom}).
\end{ex}

One may generalize a bit the previous proposition in a version which will be useful in the next subsection.
\begin{prop}\label{constantonfibers} 
Let $W\subset \R^m$ be an irreducible real algebraic variety, $\pi:W
\to V$ a surjective regular birational morphism and assume $V$ is central. 
Then, $\Regz(V)$ is isomorphic to the ring $\mathcal R_{0}^{\pi}(W)$ of continuous rational functions on $W$ 
which are constant on the fibers of $\pi$.
\end{prop}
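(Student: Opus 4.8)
The plan is to construct explicit mutually inverse ring homomorphisms between $\Regz(V)$ and $\mathcal R_0^\pi(W)$, mimicking the argument of the previous proposition but using that $\pi$ identifies $V$ with the quotient of $W$ by its fibers. First I would define the map $\pi^*:\Regz(V)\to \mathcal R_0^\pi(W)$ by $f\mapsto f\circ\pi$. This is clearly a ring homomorphism; $f\circ\pi$ is continuous (as $\pi$ is continuous for the Euclidean topology) and rational (as $\pi$ is a regular morphism and $f$ is rational, using that $\pi$ is dominant so that $\pi^*$ is well-defined on $\R(V)$), hence lies in $\Regz(W)$, and it is manifestly constant on the fibers of $\pi$. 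So the content is to produce an inverse.

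The key step is to show that any $g\in\mathcal R_0^\pi(W)$ descends to a continuous rational function on $V$. Since $\pi$ is birational, there is a non-empty Zariski open $U\subset V$ over which $\pi$ restricts to an isomorphism $\pi^{-1}(U)\xrightarrow{\sim}U$; let $\sigma=(\pi|_{\pi^{-1}(U)})^{-1}:U\to W$ be the inverse, a regular map. Then $h:=g\circ\sigma$ is a regular function on $U$, hence a rational function on $V$ (using irreducibility of $V$). I claim $h$ extends continuously to all of $V$ and that this extension is exactly the set-theoretic function $\bar g$ defined by $\bar g(x)=g(w)$ for any $w\in\pi^{-1}(x)$ — this is well-defined precisely because $g$ is constant on fibers. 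For continuity: $\pi$ is a closed surjective continuous map between locally compact spaces (both are semi-algebraic sets with the Euclidean topology, and $\pi$ is proper after the real-algebraic completion argument, or one invokes that a regular surjection of affine real varieties is closed on the semialgebraic level — alternatively, and more safely, use the curve/arc criterion), so $\bar g$ is continuous iff $\bar g\circ\pi=g$ is continuous, which holds by hypothesis. Thus $\bar g$ is a continuous function on $V$ that agrees with the rational function $h$ on the dense open $U$, so $\bar g\in\Regz(V)$. This defines a map $\mathcal R_0^\pi(W)\to\Regz(V)$, $g\mapsto\bar g$, which is plainly a ring homomorphism and inverse to $\pi^*$, since $\pi^*(\bar g)=\bar g\circ\pi=g$ and $\overline{f\circ\pi}=f$ (both agree with $f$ on $U$ and are continuous on $V$).

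The main obstacle I anticipate is the topological descent step: justifying cleanly that $\bar g$ (a priori only a function on $V$) is continuous, equivalently that $\pi$ is a topological quotient map for the Euclidean topology. Unlike the previous proposition, $\pi$ need not be a homeomorphism here — it is only birational and surjective — so some fibers may be positive-dimensional, and one must check that $\pi$ is closed (or at least a quotient map) for the Euclidean topology. The safest route, consistent with the techniques already developed in the paper, is to avoid the general topological statement and instead verify continuity of $\bar g$ at each point $x\in V$ via the arc criterion of the Lemma preceding Theorem \ref{SubstitutionRegulueArcs}: given a semi-algebraic arc $\eta$ in $V$ through $x$, use the curve selection lemma together with surjectivity of $\pi$ (applied on $\R[[t]]_{\rm alg}$ or $\Rpuialg$, lifting $\eta$ to an arc $\tilde\eta$ in $W$ with $\pi\circ\tilde\eta=\eta$, which is possible after the construction in the proof of Theorem \ref{SubstitutionRegulueArcs} since $\pi$ is surjective and $W$ is non-singular is not even needed here, only that $\pi$ is dominant and proper enough for arcs to lift — this uses the valuative criterion / Hensel-type argument) to reduce continuity of $\bar g$ along $\eta$ to continuity of $g=\bar g\circ\pi$ along $\tilde\eta$, which holds since $g\in\Regz(W)$. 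Centrality of $V$ enters to guarantee the non-singular locus is dense, so that the rational function $h$ really does determine $\bar g$.
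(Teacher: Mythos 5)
Your proposal follows essentially the same route as the paper's proof: define $\pi^*(f)=f\circ\pi$, check it lands in $\mathcal R_{0}^{\pi}(W)$ (continuity by composition, rationality from birationality), and invert it by descending fiber-constant functions to $V$, with rationality of the descended function coming from the regular inverse of $\pi$ over a dense Zariski open subset. The only real difference is that you make explicit the continuity of the descended function (via an arc-lifting argument, which indeed needs a properness-type property of $\pi$, as holds for the blow-ups used later), a point the paper dispatches in one line by asserting that a continuous fiber-constant function on $W$ goes down to a continuous function on $V$.
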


\begin{proof}
Let us consider the morphism $\pi^*: \Regz(V) \to \mathcal R_{0}^{\pi}(W)$ defined by $f\mapsto f \circ \pi$. 
This morphism is well defined since, being a composition of continuous functions, 
the function $f\circ \pi$ is also continuous, 
and also constant on the fibers of $\pi$ by construction. It is moreover rational since $\pi$ is birational. 
And the morphism $\pi^*$ is injective since $\pi$ est surjective.\par 
Finally, $\pi^*$ is surjective since any continuous function on $W$ constant on the fibers of $\pi$ 
goes down to a continuous function on $V$. By birationality of $\pi$, this new function on $V$ remains rational 
if the function on $W$ was so. 
\end{proof}

\begin{rem}\label{BlowupSurjCentral}
 Note that if $V$ is a central variety and $\pi:W\rightarrow V$ is a blowing-up, then 
 $\pi$ is surjective and one may use proposition \ref{constantonfibers}.
\end{rem}

\subsection{A counterexample} 
The aim of this subsection is to show that the ring $\Regz(V)$ does not 
necessarily satisfy the substitution property along arcs if $V$ is singular. 

\begin{thm}\label{NoSubstitutionAlongArc} 
Let us denote by $V$ the central hypersurface in $\R^4$ given by the equation 
$x^8=(z_1^2+z_2^2)y^8$. 
The ring $\Regz(V)$ of 
continuous rational functions on $V$ does not satisfy the substitution property along arcs.
\end{thm}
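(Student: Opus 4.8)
The plan is to exhibit a single continuous rational function on $V$ together with a single algebraic formal arc along which it admits no lifting, the function being $f=x^2/y^2$ and the arc being chosen so that $f$ would be forced to specialise to a square root of $t$, which is impossible inside $\R[[t]]_{\rm alg}$.

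First I would check that $f:=x^2/y^2$ lies in $\Regz(V)$. It is regular on the Zariski open set $\{y\neq 0\}$, and there the defining equation of $V$ gives the identity $f^4=x^8/y^8=z_1^2+z_2^2$ in $\R(V)$. Being a quotient of squares, $f\geq 0$ wherever defined, so on $\{y\neq 0\}$ one has $f=(z_1^2+z_2^2)^{1/4}$, a function which is continuous on all of $V$; since $V$ is central the Zariski open set $\{y\neq 0\}$ is Euclidean-dense in $V$ (indeed $V\cap\{y=0\}=\{x=y=0\}$ is contained in $\Sing V$), so $f$ admits $(z_1^2+z_2^2)^{1/4}$ as a continuous extension. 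Hence $f\in\Regz(V)$ and the algebraic relation $f^4=z_1^2+z_2^2$ holds in $\Regz(V)$.

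Next I would consider the algebraic formal arc $\gamma\colon\R[V]\to\R[[t]]_{\rm alg}$ defined by $x\mapsto 0$, $y\mapsto 0$, $z_1\mapsto t$, $z_2\mapsto 0$. This is a well-defined $\R$-algebra morphism, because the defining polynomial $x^8-(z_1^2+z_2^2)y^8$ is sent to $0-(t^2+0)\cdot 0=0$, so $\gamma$ is an arc traced on $V$ (supported on the line $\{x=y=z_2=0\}\subset\Sing V$). If a lifting $\tilde\gamma\colon\Regz(V)\to\R[[t]]_{\rm alg}$ existed, applying it to the relation $f^4=z_1^2+z_2^2$ would give $\tilde\gamma(f)^4=\gamma(z_1)^2+\gamma(z_2)^2=t^2$ in $\R[[t]]_{\rm alg}\subset\R[[t]]$; but comparing $t$-adic orders yields $4\,\ord_t(\tilde\gamma(f))=2$, which has no solution with $\ord_t(\tilde\gamma(f))$ a nonnegative integer, a contradiction. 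So $\gamma$ has no lifting to $\Regz(V)$, and $\Regz(V)$ does not satisfy the substitution property along (algebraic formal) arcs; this already yields the theorem.

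The step I expect to be the real obstacle is the analogous statement for algebraic Puiseux arcs, where now the failure is one of uniqueness rather than of existence: over $\Rpuialg$ the arc $\gamma$ does admit the natural lifting sending $f$ to $t^{1/2}$. Since the relation $f^4=z_1^2+z_2^2$ only forces $\tilde\gamma(f)^2=t$ (the sign being positive in the ambient real closed field), $f\mapsto -t^{1/2}$ is a competing candidate, and I would turn it into a genuine second lifting by analysing the fibre of $\spr\Regz(V)\to\spr\R[V]$ over the point defined by $\gamma$ and showing it has at least two points. This requires a concrete enough grip on $\Regz(V)$ near the singular locus — for instance that, after suitable localisation, $\Regz(V)$ is generated over $\R[V]$ by $f$ subject only to $f^4=z_1^2+z_2^2$ and $fy^2=x^2$, together with the fact that $f$ is not a sum of squares in $\Regz(V)$, which I would establish by restricting to $\{x=y=0\}$ (where $f$ becomes $(z_1^2+z_2^2)^{1/4}$) via an order-of-vanishing argument. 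Such a description of $\Regz(V)$ I would extract either from a birational surjection onto $V$ as in Proposition~\ref{constantonfibers}, or directly by decomposing continuous rational functions along the stratification of $V$ by $\{y\neq 0\}$ and $\{x=y=0\}$.
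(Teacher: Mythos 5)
Your first argument is correct as far as it goes: $f=x^2/y^2$ does lie in $\Regz(V)$ with $f^4=z_1^2+z_2^2$ in $\Regz(V)$, and for the arc $x=y=z_2=0$, $z_1=t$ any lifting into $\R[[t]]_{\rm alg}$ would have to satisfy $\tilde\gamma(f)^4=t^2$, which is impossible there. But this only disproves the substitution property along \emph{algebraic formal} arcs, by failure of \emph{existence}, and that phenomenon is exactly the easy observation the paper already records just before Section 4 (the surface $y^3=z^2x^3$ with $f=y/x$ and $f\circ\gamma=t^{2/3}\in\Rpuialg\setminus\R[[t]]_{\rm alg}$). The content of Theorem \ref{NoSubstitutionAlongArc}, and what the paper actually proves (``we are going to prove that the unicity in the substitution fails''), is the failure along \emph{algebraic Puiseux} arcs, where existence of a lifting into $\Rpuialg$ is automatic (evaluate the continuous semi-algebraic extension along the arc), so the only possible failure is non-uniqueness. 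You correctly identify this as ``the real obstacle'', but you do not prove it, and the sketch you give has a genuine gap: the claim that, after localisation, $\Regz(V)$ is generated over $\R[V]$ by $f$ subject only to $f^4=z_1^2+z_2^2$ and $fy^2=x^2$ is unjustified (and is not needed, nor used, in the paper), and producing a second point of the fibre of $\spr\Regz(V)\to\spr\R[V]$ would still not by itself give a second lifting with values in the valuation ring $\Rpuialg$.

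For comparison, the paper's route to the uniqueness failure is quite different and is where all the work lies: blow up $V$ along the $(z_1,z_2)$-plane, identify $\Regz(V)$ with the ring $\mathcal R_0^{\pi}(W)$ of continuous rational functions on $W$ constant on fibres (Proposition \ref{constantonfibers}); restrict an arbitrary $h\in\Regz(V)$ to the nonsingular curve $C\subset W$ over the arc, using Koll\'ar--Nowak to keep rationality, and use the fibre-invariance to get parity, so that $h\circ\gamma(t)=F(\sqrt t)$ with $F$ a one-variable rational function regular on $[0,+\infty)$ (Lemma \ref{vssubsex2}); deduce via a \L ojasiewicz identity that any lifting of $\gamma$ is determined by its value on $g=x^2/y^2$ (Lemma \ref{vssubsex3}); and finally check that the assignment $g\mapsto-\sqrt t$ is compatible with every polynomial relation, yielding two distinct liftings $\phi_{\pm}$ into $\Rpuialg$. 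None of these steps is supplied by your proposal, so as it stands it proves only the weaker formal-arc statement and leaves the theorem's actual assertion (non-uniqueness along Puiseux arcs) open.
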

\begin{proof}
We are going to prove that the unicity in the substitution fails.
First note that the variety $V$ is central, irreducible and singular along 
the plane of coordinates $(z_1,z_2)$ and also along the $y$ axis.\par 
Let us remark that the rational function $x^2/y^2$, 
which is well defined outside the singular plane, does extend by continuity 
on the whole $V$ to the function $~^4\sqrt{z_1^2+z_2^2}$, 
and hence $x^2/y^2$ defines a function $g$ in $\Regz(V)$. Note that it is not
the case for the function $x/y$. Indeed, for all $(x,y,1,0)\in V$, one has
$\left(\frac{x}{y}\right)^8=1$ and hence 
$\frac{x}{y}\to\pm 1$ as $(x,y)\to (0,0)$. Since $x$ and $y$ can take 
positive and negative values on $X$, $\frac{x}{y}$ does not admit a limit as $(x,y)$ goes to $(0,0)$.\par

The blowing-up $\pi:W\to V$ of $V$ along the plane of coordinates $(z_1,z_2)$ can be seen in the chart given by the coordinates 
$$x=uv,~y=v,~z_1=w_1,~z_2=w_2$$
as the variety with equation $u^8=w_1^2+w_2^2$. 
Hence $W$ is the product of a surface $S$ in $\R^3$, defined by the same equation as the one of $W$, with 
a line (the $v$-axis). The surface $S$ has an isolated singularity at the origin, 
hence the singular locus of $W$ consists of the $v$-axis.\par

Since $V$ is a central variety, the blowing-up $\pi$ is surjective (see remark \ref{BlowupSurjCentral})
and, using \ref{constantonfibers}, one may identify $\Regz(V)$ 
with the ring $\mathcal R_0^{\pi}(W)$ of continuous rational functions on $W$ which are constant on the fibers of $\pi$.\par

We are going to exhibit an algebraic Puiseux arc contained in the 
variety $V$ which can be lifted to the ring $\Regz(V)$ in two different ways. 
Let us denote by $\gamma:[0,\epsilon[ \to V$ the polynomial arc defined by $\gamma(t)=(0,0,0,t)$. 
The arc $\gamma$ is contained in the singular plane of $V$. 
Moreover, it can be lifted to $W$ into exactly two different algebraic Puiseux arcs given by 
$$\tilde \gamma_{\pm} (t)=(\pm^4\sqrt{t},0,0,t).$$

\begin{lem}\label{vssubsex2} 
Let $f\in \Reg_0(V)$. The semi-algebraic function $t \mapsto f\circ \gamma (t)$ is the composition of 
a one variable rational function $F$, regular at any point in $[0,+\infty)$,
with the square root function, i.e. $(f\circ \gamma) (t)=\frac{U(\sqrt t)}{V(\sqrt t)}$
where $U$ and $V$ are one-variable polynomials such that $V(0)\not=0$.
\end{lem}

\begin{proof} It suffices to show the result for the function $f\circ \pi$ composed with a lifting
of the arc $\gamma$ on $Y$ (for instance $\tilde \gamma_+$) since 
$f\circ \gamma=f\circ \pi \circ \tilde \gamma_+$.\par 
We want to restrict the function $f\circ \pi$, which is continuous rational 
on $W$, to the image of $\tilde \gamma_+$. The Zariski closure of the image of
$\tilde \gamma_+$ is the nonsingular irreducible curve $C$ 
defined by $u^4=w_2$ and $v=w_1=0$ in $S\times \{0\}\subset W$. 
Note that $C$ is not contained in the singular locus of $W$. 
Therefore the restriction of $f\circ \pi$ to $C$ is still rational 
by \cite[Proposition 8]{KN}, and of course continuous. In particular, it is a regular function on $C$ because $C$ is a nonsingular curve.\par
Parametrizing $C$ with $\theta \mapsto (\theta, \theta^4)$, one gets the existence of a 
regular function $G$ on $\R$ such that  
$f\circ \pi (\theta,0,0,\theta^4)=G(\theta)$. 
Moreover, this regular function is even since $(\theta,0,0,\theta^4)\in W$
and $(-\theta,0,0,\theta^4)\in W$ lies over the same point $(0,0,0,\theta^4)\in V$
and $f\in\Reg_\pi^0(W)$. Hence, we claim there is a one-variable rational function $F$, 
regular at any point in $[0,+\infty)$, such that $G(\theta)=F(\theta^2)$. Indeed, we may write
$G(\theta)=R(\theta)/(1+Q(\theta))$ with $R,Q$ some real one-variable polynomials
and such that $Q$ is a sum of squares of
one-variable polynomials. Since
$G(\theta)=(R(\theta)(1+Q(-\theta)))/((1+Q(\theta))(1+Q(-\theta)))$ and
$((1+Q(\theta))(1+Q(-\theta)))$
have
even parity then it follows that $(R(\theta)(1+Q(-\theta)))$ has also
even parity and we get the claim.
To end, one sets $\theta^4=t$ and gets 
$$(f\circ \gamma) (t)=G(~^4\sqrt t)=F(\sqrt t)$$
as desired.
\end{proof}

\begin{lem}\label{vssubsex3} Any morphism $\phi:\Regz(V)\rightarrow \Rpuialg$ 
which extends the morphism given by the evaluation along $\gamma$ on 
polynomial functions, is entirely determined by the image of the continuous rational function 
$g$ (which extends $x^2/y^2$ by continuity on all $V$).
\end{lem}

\begin{proof}
According to lemma \ref{vssubsex2} and since $(g\circ\gamma)(t)=\sqrt{t}$, 
for any $f\in \Regz(V)$, there is a one-variable rational function $F$ regular at any point in $[0,+\infty)$ 
such that $f\circ \gamma=F(g\circ \gamma)$.
Hence, for any $t\in[0,\epsilon[$, one has $(f-F\circ g)(\gamma(t))=0$. Note also that
$F\circ g\in \Regz(V)$.
Since the image of $\gamma$ is locally contained in the zero set of $x^2+y^2+z_1^2$, one has,
by \L ojaciewicz equality (\cite[th\'eor\`eme 2.6.6]{BCR}), the existence of an integer $N$ and 
a semi-algebraic continuous function $h$ such that
$$(f-F\circ g)^N=(x^2+y^2+z_1^2)h.$$
Note that this equality shows also that $h\in\Regz(V)$.
Hence, for any given morphism $\phi$, one has $\phi((f-F\circ g)^N)=0$ which gives 
$\phi(f)=\phi(F\circ g)$. Since $F$ is regular at $0$, one may evaluate its denominator
at any arc going through the origin, and hence one gets also $\phi(f)=F(\phi(g))$.
\end{proof}

In particular, one deduces that there exist at most two candidates for such morphisms : 
either $\phi_+$ which sends $g$ onto $\sqrt{t}$ 
or $\phi_-$ which sends $g$ onto $-\sqrt{t}$.
The morphism $\phi_+$ is the natural morphism coming from the evaluation of continuous semi-algebraic 
functions along $\gamma$. The next lemma states that the choice $\phi_-$ is also possible.

\begin{lem} The setting $\phi_-(g)=-\sqrt{t}$ defines a morphism 
$\phi_-:\Regz(V)\rightarrow \Rpuialg$ which extends 
the morphism given by $\gamma$ on 
$\R[V]$ the ring of polynomial functions over $V$.
\end{lem}

\begin{proof}
One has to check that $\phi_-$ does respect algebraic relations existing for elements of $\Regz(V)$ 
over $\R[V]$, the ring of polynomial functions over $V$.
Let $f_1,\ldots,f_n$ in $\Regz(V)$ such that $P(f_1,\ldots,f_n)=0$
where $P\in\R[V]$. One has to show that $\phi_-(P(f_1,\ldots,f_n))=0$.

According to the end of the proof of the previous lemma, there 
exists a one-variable rational function $F$, regular at $0$, such that 
$\phi_+(P(f_1,\ldots,f_n))=F(\phi_+(g))$
and $\phi_-(P(f_1,\ldots,f_n))=F(\phi_-(g))$. 

Let us write $F=U/V$ where $U$ and $V$ are one-variable polynomials.

Since the natural morphism $\phi_+$ is well defined, one gets from 
$P(f_1,\ldots,f_n)=0$ that $\phi_+(P(f_1,\ldots,f_n))=0$.
Namely, one has $F(\phi_+(g))=0$, which means that $U(\phi_+(g))=0$.

One may write $U(x)=S(x^2)+xT(x^2)$ where $S$ and $T$ are one-variable polynomials.
Then, $U(t)=0$ for any $t\in[0,\sqrt{\epsilon}[$, and hence $S=T=0$, i.e. $U=0$.

Consequently, $F(\phi_-(g))=0$ and hence $\phi_-(P(f_1,\ldots,f_n))=0$, 
the desired equality. 
\end{proof}
As a consequence the unicity in the substitution property fails.
\end{proof}


\end{document}